\numberwithin{equation}{section}
\font\teneufm=eufm10
\font\seveneufm=eufm7
\font\fiveeufm=eufm5
\newtheorem{cor}{Corollary}[section]
\newtheorem{definition}[cor]{Definition}
\newtheorem{lem}[cor]{Lemma}
\newtheorem{prop}[cor]{Proposition}
\newtheorem{theorr}{Theorem}
\newtheorem{propp}[theorr]{Proposition}
\newtheorem{corr}[theorr]{Corollary}
\title{\bf Classification of the blow-up behavior for a semilinear wave equation with nonconstant degenerate coefficients}
\date{}
\newcounter{biscompt}
\begin{document}
\author{{\large Asma Azaiez}\\

\small  \vskip-0,4cm 
\small  
University of Carthage, ISEP-BG, 2036 La Soukra, Tunisia.\\

\medskip

\noindent  {\large Hatem Zaag}\\
\small Universit\'e Sorbonne Paris Nord, LAGA, CNRS (UMR 7539),\\
\small  Villetaneuse, F-93420, France.}
\maketitle
\tableofcontents

\maketitle 
\begin{abstract}
We consider a nonlinear wave equation with nonconstant coefficients. In particular, the coefficient in front of the second order space derivative is degenerate. We give the blow-up behavior and the regularity of the blow-up set. Partial results are given at the origin, where the degeneracy occurs. 
Some nontrivial obstacles, due to the nonconstant speed of propagation, have to be surmounted.
 \end{abstract}

\section{Introduction}
We consider the following nonlinear wave equation with nonconstant coefficients in the radial case 
:
\begin{equation} \left\{
\begin{array}{l}
\displaystyle\partial^2_{t} u = a(x)\left(\partial^2_{x} u+\frac{N-1}{x}\partial_xu\right)+b (x)|u|^{p-1}u+f(u)+ g(x,t,\partial_xu, \partial_t u ), \\
 \partial_x u(x,t) \sqrt{a(x)}\rightarrow 0\mbox{ at } x= 0,\\
u(0)=u_{0} \mbox{ and }  u_{t}(0) = u_{1},
\end{array}
\right . \label{waveq}
\end{equation}
 where $u(t):x\in\mathbb{R}^+\to u(x,t) \in \mathbb{R}$, and $N$ is the dimension of the physical space.

\noindent We assume that $a\in C^1(\mathbb{R}_+^*)$ and $b\in
C^1(\mathbb{R}_+)$ satisfy the following conditions for all $x>0$,
\begin{equation} \;\; \;  \left\{
\begin{array}{l}
\;a(x)> 0, b(x)> 0,\\
 \;\int_{0}^1\frac{dx}{\sqrt{a(x)}} <+\infty, \\
  \;\;
|(N-1)\frac{\sqrt{a(x)}}{x}-\frac{1}{2}\frac{a'(x)}{\sqrt{a(x)}}-\frac{d-1}{\phi(x)}|\le M,\end{array}
\right . \label{a}
\end{equation}
for some $\epsilon_0>0$, $M>0$ and
\begin{equation}\label{3.3}d\in \mathbb{N}^*\footnote{In particular,
    at some point we will integrate with respect to the weight
    $(1-r^2)^{\frac{2}{p-1}-\frac{d-1}{2} }r^{d-1}$ which is in
    $L^1(0,1)$ if (\ref{3.3}) and (\ref{3.5}) hold.},\end{equation}
where
$\phi$ is defined by
\begin{equation}\label{phi}
\phi(x)=\int_0^x \frac{dy}{\sqrt{a(y)}}.
\end{equation}
Note that the third estimate in (\ref{a}) is useful only near the
origin. However, introducing such a change will induce a lot of pure
technical and trivial complications in the proof.
For that reason, we don't make this change, and leave it to the interested reader to check by himself that it works straightforwardly.

\medskip

The exponent $p$ is superlinear and subcritical (in relation to $d$) , in the sense that
\begin{equation}
p>1\mbox { and }p<\frac{d+3}{d-1}\mbox {  if }d\ge 2.\label{3.5}
\end{equation}

\noindent Conditions  (\ref{3.3}) and  (\ref{3.5}) will prove to be meaningful after a change of variables we perform below in (\ref{6.5}).

We assume in addition that $f$ and $g$ are $C^1$ functions, where $f:\mathbb{R}\to  \mathbb{R}$ and $g:\mathbb{R}^4\to  \mathbb{R}$ satisfy
\begin{equation} \left\{
\begin{array}{l}
|f(u)|\le M(1+|u|^q ),\mbox{  for all }u\in\mathbb{R}\mbox{  with }  (q<p, M>0),\\
|g(x,t,v,z)|\le M(1+|v|\sqrt{a(x)}+|z|),\mbox{  for all }x,t,v,z \in\mathbb{R}.
\end{array}
\right . \label{condition}
\end{equation}

\bigskip

A typical example that satisfies (\ref{a}) and which will be discussed in this paper is the following :
\begin{equation}\label{choi}
a(x)=|x|^\alpha \mbox{ with } \alpha<2.
\end{equation}
The example (\ref{choi}) shows a degeneracy at $x=0$ when $\alpha\neq 0$. Note that for $\alpha<0$, the wave speed goes to infinity and for $\alpha \in (0,2)$ it goes to zero.
For this case, conditions (\ref{a}), (\ref{3.3}) and (\ref{condition}) are fulfilled for $N\ge 2$, $|g(x,t,v,z)|\le M(1+|v||x|^{\frac{\alpha}{2}  }+|z|)$, and $d=\frac{2(N-\alpha)} { 2-\alpha}$.  \\
Note in particular that example \eqref{choi} is not
  relevant for $N=1$, since the third condition of \eqref{a} is never
  fulfilled, for any $\alpha<2$. On the contrary,  when $N=2$, example \eqref{choi} is valid for
any value $\alpha<2$, and we always have $d=2$. When $N\ge 3$, we
need to take $\alpha$ of the form $\alpha_k = \frac{2(k-N)}{k-2}$
where $k\ge 3$ is an integer, and in this case, $d= k$.

\bigskip

\noindent Initial data $(u_0, u_1)$ will be considered in the space $H_1\times H_0$ defined by
\begin{equation}\label{6.11}
H_0=\{v \in L^2_{loc}(\mathbb{R}^+)\; |\; V\in L^2_{loc, u, rad}(\mathbb{R}^+) \},
\end{equation}
\begin{equation}\label{6.3}
H_1=\{v \in L^2_{loc}(\mathbb{R}^+) \; |\; V\in H^1_{loc, u, rad}(\mathbb{R}^+) \},
\end{equation}
where 
\begin{align*}\label{}
V(X)= v(x),\;\; X=\phi(x),
\end{align*}
$\phi$ was given in (\ref{phi}),
\begin{equation*}
L^2_{loc, u, rad}(\mathbb{R}^+)=\{V \in L^2_{loc}(\mathbb{R}^+)
 \; |\;  \displaystyle\sup\limits_{r_0\ge 1} \frac{1}{r_0^{d-1}}        \int_{r_0-1}^{r_0+1} v(r)^2r^{d-1} dr <+\infty \},
\end{equation*}
and
\begin{equation*}
H^1_{loc, u, rad}(\mathbb{R}^+)=\{V \in L^2_{loc, u, rad} (\mathbb{R}^+) \; |\; \partial_r V \in L^2_{loc, u, rad}(\mathbb{R}^+) \}.
\end{equation*}

We recall the spaces $L_{loc,u}^2(\mathbb{R}^d)$ and $H_{loc,u}^1(\mathbb{R}^d)$ introduced by Antonini and Merle in \cite{AM01} by the following norms :

 $$||v||^2_{L^2_{loc,u}}=\displaystyle\sup\limits_{a\in \mathbb{ R}^d}\int_{|x-a|<1}|v(x)|^2 dx \mbox{ and }|| v||^2_{H^1_{loc,u}}=||v||^2_{L^2_{loc,u}}+||  \nabla v||^2_{L^2_{loc,u}},$$
we show in Appendix \ref{appendixA} that the spaces $L^2_{loc,u,rad}$  and $H^1_{loc,u,rad}$ are simply the radial versions of the $L^2_{loc,u}$ and  $H^1_{loc,u}$ spaces.

\medskip

Equation (\ref{waveq}) corresponds to physical situations where the wave propagates in non-homogeneous media (see for example \cite{rty}). It appears in models of traveling waves in a non-homogeneous gas with damping that changes with the position. The unknown $u$ denotes the displacement, the coefficient $a$, called the bulk modulus, accounts for changes of the temperature depending on the location.  
 
\bigskip

When $a(x)\equiv1$, this equation was considered by Hamza and Zaag in \cite{HZ12} (see also \cite{HZ2012} for some related results). 
Basically, the authors showed that the results previously proved by Merle and Zaag in \cite{MR2362418}, \cite{MR2415473}, \cite{MR2931219}  and \cite{Mz12} for the unperturbed semilinear wave equation
\begin{equation}\label{equa6.5}
\partial^2_{t}  u=
\partial^2_{x}  u+|u|^{p-1} u
\end{equation} 
do extend to the perturbed case. We also mention the work of Alexakis and Shao \cite{AS} who study the energy concentration in backward light cones near blow-up points.

\bigskip

In this paper, we want to explore the case where $a(x)\not\equiv1$. When $a$ is space dependent, we find that although the blow-up results of \cite{HZ12}  remain valid, some nontrivial obstacles have to be surmounted, in particular, at the origin where the degeneracy may occur (see for instance the typical example (\ref{choi})). Since the problem does not have a constant speed of propagation, we have to apply an appropriate transformation to obtain the desired estimates.
In fact, we remark that we can reduce to the case $a(x)\equiv 1$ thanks to the following change of variables:
\begin{equation}\label{6.5}
U(X,t)= u(x,t),\;\; X=\phi(x)\end{equation}
where $\phi$ is given in (\ref{phi}). 

Applying this transformation to (\ref{waveq}), we see that $U$ satisfies:
\begin{align*}
\partial^2_{t}U =\partial^2_{X}U+ \left((N-1)\frac{ \sqrt{a(x)}}{x} -\frac{1}{2}\frac{a'(x)}{ \sqrt{a(x)} }\right)\partial_{X}U +\beta(X) |U|^{p-1}U+f(U)+g(x,t,\frac{\partial_{X}U}{\sqrt{a(x)}},\partial_t U) 
\end{align*}
where $ \beta(X)=b(x)$ and $U(t):X\in\mathbb{R}^+\to U(X,t) \in \mathbb{R}$.

We rewrite this equation as follows
\begin{align}
\label{eq6.5}
\partial^2_{t}U =\partial^2_{X}U+\frac{d-1}{X}\partial_{X}U +\beta(X) |U|^{p-1}U+f(U)+G(X,t,\partial_{X}U,\partial_t U) 
\end{align}
with
\begin{align*}G(X,t,\partial_{X}U,\partial_t U) =g(x,t,\frac{\partial_x u}{\sqrt{a(x)}}, \partial_t u)+\left( (N-1)\frac{\sqrt{a(x)}}{x}-\frac{1}{2}  \frac{a'(x)}{\sqrt{a(x)}}-\frac{d-1}{X}\right)\partial_X U.
\end{align*}
We see from (\ref{a}) and (\ref{condition}) that we have
 \begin{align}
|f(U)|&\le M(1+|U|^q ),\mbox{  for all }U\in\mathbb{R}\mbox{  with }  (q<p, M>0),\\
|G(X,t,\partial_{X}U,\partial_t U)  |&\le M(1+|\partial_{X}U|+|\partial_t U|).\label{Hg}
\end{align} 

Note that we have,
$$\partial_X U(0,t)=0$$
thanks to the condition on the space derivative in (\ref{waveq}).
\bigskip

As for the Cauchy problem for equation (\ref{waveq}), we remark that thanks to the change of variables (\ref{6.5}), we reduce to the formalism of Hamza and Zaag in \cite{HZ}. Indeed, recalling that $(u_0,u_1)\in H_0\times H_1$, 
we derive by definition that $(U(X,0),\partial_t U(X,0))\in  L^2_{loc, u, rad}\times H^1_{loc, u, rad}$ defined in (\ref{6.11}) and (\ref{6.3}).

Therefore, as mentioned in  \cite{HZ} we use a modification of the argument by Georgiev and Todorova \cite{GT} to derive a solution $(U,\partial_t U)\in C([0,T_0),H^1_{loc,u,rad} \times L^2_{loc,u,rad})$ for some $T_0>0$. Thanks to the finite speed of propagation, we extend the definition of $U(X,t)$ to the following domain

\begin{equation*}\label{}
D_U=\{(X,t); 0\le t<T_U(X)\},
\end{equation*}
for some $1-$Lipschitz function $T_U$.

Going back to problem (\ref{waveq}), we see that we have a unique solution $(u,\partial_t u)\in C([0,T_0),H_0\times H_1)$ which is defined on a larger domain

\begin{equation*}\label{}
D_u=\{(x,t)| 0\le t<T(x)\},
\end{equation*}
where 
\begin{equation}\label{T}
T(x)=T_U(\phi (x)).
\end{equation}

Since $T'(x)=\frac{T'_U(\phi (x))}{\sqrt{a(x)}}$, it follows that $T$ is a Lipschitz function, with $\frac{1}{\sqrt{a(x)}}$ as local Lipschitz constant for $x\neq 0$. Note that $T(x)$ and $\Gamma$ will be referred as the blow-up time and the blow-up curve in the following.\\

\noindent Proceeding as in the case $a(x)\equiv1$, we introduce the following non-degeneracy condition for $\Gamma$. If we introduce for all $x \in \Bbb{R},$ $t\le T(x)$ and $\delta>0$, the generalized cone
\begin{equation}\label{2.5}
 \mathcal{C}_{x,t, \delta }=\{(\xi,\tau)\neq (x,t)\,| 0\le \tau\le t-\delta |\phi(\xi)-\phi(x)|\},
\end{equation}
then our non-degeneracy condition is the following: $x_0$ is a non-characteristic point if
\begin{equation}\label{4}
 \exists \delta = \delta (x_0) \in (0,1) \mbox{ such that } u \mbox{ is defined on }\mathcal{C}_{x_0,T(x_0), \delta_0}.
\end{equation}
If condition (\ref{4}) is not true, then we call $x_0$ a characteristic point.

\noindent We denote by $\mathcal{R}$ the set of non characteristic points and $\mathcal{S}$ the set of characteristic points.

\noindent Note that the set $ \mathcal{C}_{x,t, \delta }$ defined in (\ref{2.5}) is a cone in the variables $(X,t)$ (\ref{6.5}). In the $(x,t)$ variables, its boundary is given by the characteristics associated to the linear problem
$$\partial^2_{t}  u=a(x)\partial^2_{x}  u.$$

In order to state our results, we will use similarity variables associated to $U(X,t)$ defined in (\ref{6.5}), and which turn out to be a nonlinear version of the standard similarity variables, when related directly to $u(x,t)$:

\begin{align}\label{nchg}
w_{x_0}(y,s) =(T(x_0)-t)^\frac{2}{p-1}u(x,t), \quad  y=\frac{\phi (x)-\phi (x_0)}{T(x_0)-t}, \quad
s=-\log(T(x_0)-t).
\end{align}

Applying this transformation to (\ref{eq6.5}), we see that $w_{x_0}(y,s)$ satisfies the following equation for all $|y|<1$ and $s\ge - \log T(x_0)$ :
\begin{eqnarray} \label{equa}
\notag \partial^2_{s} w &=&(1-y^2)\partial_y^2 w_{x_0}-2 \frac{p+1}{p-1} y \partial_y w_{x_0}-2\frac{p+1}{(p-1)^2} w+ b (x_0)|w|^{p-1}w-\frac{p+3}{p-1} \partial_s w- 2 y \partial_{ys} w\\ &+& e^{-s} \frac{(d-1)}{\phi(x_0)+y e^{-s}} \partial_y w+  e^{-\frac{2ps}{p-1}}f(e^{\frac{2s}{p-1}}w)+
 (b (x)-b (x_0))|w|^{p-1}w\\ &+& e^{-\frac{2ps}{p-1}}  G(\phi(x_0)+y e^{-s},T_0-e^{-s},e^{\frac{(p+1)s}{p-1}} \partial_y w,  e^{\frac{(p+1)s}{p-1}} ( \partial_{s} w+y\partial_{y} w+ \frac{2}{p-1} w ))\notag.
 \end{eqnarray}

\bigskip

Let us introduce the solitons
\begin{eqnarray*}\label{defk}
\kappa(\hat d,y)= \kappa_0 \frac{(1-{\hat d}^2)^\frac{1}{p-1}}{(1+\hat d y)^\frac{2}{p-1}} \mbox{ with }\kappa_0=\left(\frac{2(p+1)}{(p-1)^2}\right)^\frac{1}{p-1},\,(\hat d, y) \in (-1,1)^2.
\end{eqnarray*}

We also introduce
 \begin{eqnarray}\label{4.1}
 \bar \xi_i(s)=\left(  i-\frac{k+1}{2}\right)\frac{p-1}{2} \log s+ \bar \alpha_i(p, k)
  \end{eqnarray}
  where the sequence $(\bar \alpha_i)_{i=1,...,k}$ is uniquely determined by the fact that $( \bar \xi_i(s))_{i=1,..., k}$ is an explicit solution with zero center of mass for this ODE system:
  \begin{eqnarray*}\label{}
\forall i=1,...,k,\frac{1}{c_1}  \dot{\xi_i}=e^{-\frac{2}{p-1}(\xi_i-\xi_{i-1})} -e^{-\frac{2}{p-1}(\xi_{i+1}-\xi_{i})} ,
  \end{eqnarray*} 
 where $ \xi_0(s)\equiv \xi_{k+1} (s)\equiv0$ and $c_1=c_1(p)>0$ appeared for the first time in Proposition $3.2$ page $590$ of Merle and Zaag \cite{MR2931219}.\\

\subsection{Blow-up results}
We dissociate two cases in this subsection. In fact, equation (\ref{equa}) has a different structure according to the position of $x_0$.

\subsubsection{Behavior outside the origin}
When $x_0\neq 0$, by (\ref{phi}) we have $\phi(x_0)\neq 0$, hence the term $\frac{e^{-s}}{\phi(x_0)+y e^{-s}}\partial_y w_{x_0}$ in (\ref{equa}) is a lower order term bounded by $\frac{2e^{-s}}{|\phi(x_0)|}|\partial_y w_{x_0}|$ for $s$ large and will be treated as a perturbation, as in Hamza and Zaag \cite{HZ12}.

Accordingly, we may write the second and first order space derivatives in equation (\ref{equa}) in the following divergence form:

\begin{equation*} 
(1-y^2)\partial_y^2 w_{x_0}-2 \frac{p+1}{p-1} y \partial_y w_{x_0}=\frac{1}{\rho(y)}  \partial_y (\rho(1-y^2)\partial_y w_{x_0}  )
\end{equation*}
where $\rho(y)=(1-y^2)^\frac{2}{p-1}$
exactly as in the one dimensional case of the standard semilinear wave equation (\ref{equa6.5}).

We recall that for the unperturbed case (ignoring line 2 and 3 in (\ref{equa})), the Lyapunov functional is given by
\begin{equation}\label{15}
 E(w,\partial_s w)=\int_{-1}^{1} \left( \frac{1}{2} |\partial_s w|^2+\frac{1}{2} |\partial_y w|^2 (1-y^2)+\frac{p+1}{(p-1)^2}|w|^2-\frac{b(x_0)}{p+1}|w|^{p+1}\right) \rho dy.
\end{equation}

where $(w, \partial_s w)\in H^1_\rho\times L^2_\rho$, with
 \begin{eqnarray}\label{L2}
L^2_\rho=\{ v   \Big| \parallel  v \parallel_{L^2_\rho}^2 \equiv \int_{-1}^{1}| v(x)|^2 \rho\;dy< +\infty \} ,
 \end{eqnarray}
and
 \begin{eqnarray}\label{H1}
H^1_\rho=\{ v \, \Big|  \parallel  v \parallel_{L^2_\rho} + \parallel   \nabla v \parallel_{L^2_\rho} < +\infty \}.
 \end{eqnarray}
We see that $E$ is well defined from the fact that the three first terms of its expression in (\ref{15}) are in $L^1_\rho$ ; for the last term we need to use the Hardy-Sobolev inequality given by Merle and Zaag in Appendix B page $1163$ of \cite{MZ05}:

$$ \parallel  w \parallel_{L^{p+1}_\rho}\le C\parallel w \parallel_{H^1_\rho}.$$

Now, if $u$ is a solution of (\ref{equa}), with blow-up surface $\Gamma \,:\, \{x\rightarrow T(x)\},$ and if $x_0\neq 0$, then we have the following:

\begin{theorr}{\label{T0}}{\bf(Bound in similarity variables outside the origin)} 

\noindent $i)${\bf (Non-characteristic case)}:

 If $x_0\neq 0$ is a non-characteristic point,  then, for all $s$ large enough:

$$0 < \epsilon_0(p)\le||w_{x_0}(s)||_{H^1(-1,1)}+||\partial_s w_{x_0}(s)||_{L^2(-1,1)}\le K(x_0).$$
$ii)${\bf (Characteristic case)}:\\
If $x_0\neq 0$ is a characteristic point,  then, for all $s$ large enough:
$$||w_{x_0}(s)||_{H^1_\rho}+||\partial_s w_{x_0}(s)||_{L^2_\rho}\le K(x_0).$$
\end{theorr}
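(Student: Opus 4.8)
The plan is to treat equation~(\ref{equa}), for $x_0\neq 0$, as an exponentially small perturbation of the similarity-variable equation of the standard semilinear wave equation, and then run the Lyapunov-functional machinery of Merle--Zaag \cite{MR2362418} and Hamza--Zaag \cite{HZ12}. The first task is to check that every term on lines~2 and~3 of~(\ref{equa}) decays like $e^{-cs}$ for some $c>0$, uniformly for $|y|<1$. Since $x_0\neq 0$ forces $\phi(x_0)\neq 0$, for $s$ large the factor $\frac{e^{-s}(d-1)}{\phi(x_0)+ye^{-s}}$ is bounded by $\frac{2(d-1)}{|\phi(x_0)|}\,e^{-s}$. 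The term $e^{-\frac{2ps}{p-1}}f(e^{\frac{2s}{p-1}}w)$ is controlled by $Me^{-\frac{2ps}{p-1}}+Me^{\frac{2(q-p)s}{p-1}}|w|^q$, hence exponentially small because $q<p$. The coefficient $b(x)-b(x_0)$ is $O(e^{-s})$, since $x=\phi^{-1}(\phi(x_0)+ye^{-s})$ satisfies $|x-x_0|\le Ce^{-s}$ on $|y|<1$ (here $x_0\neq 0$ is essential, as it keeps $\phi^{-1}$ differentiable) and $b$ is $C^1$. Finally, because of its prefactor $e^{-\frac{2ps}{p-1}}$ against arguments scaled by $e^{\frac{(p+1)s}{p-1}}$, the $G$-contribution reduces, via~(\ref{Hg}), to a term of size $e^{-s}\bigl(1+|\partial_y w|+|\partial_s w|+|w|\bigr)$, the key cancellation being $-\frac{2ps}{p-1}+\frac{(p+1)s}{p-1}=-s$.

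Next I would differentiate the energy $E(w,\partial_s w)$ of~(\ref{15}) along solutions of~(\ref{equa}). The leading part reproduces the classical dissipation identity, giving $\frac{d}{ds}E=-c\int_{-1}^{1}(\partial_s w)^2\frac{\rho}{1-y^2}\,dy$ up to a boundary contribution at $y=\pm1$, while the perturbations above produce error terms of the form $e^{-cs}$ times quantities controlled by $E$ and the dissipation. Using Cauchy--Schwarz together with the Hardy--Sobolev inequality $\|w\|_{L^{p+1}_\rho}\le C\|w\|_{H^1_\rho}$ quoted above, these errors are absorbed by adjoining exponentially decaying correction terms to $E$, producing a corrected functional that is bounded and essentially nonincreasing. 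A Gronwall argument then gives a uniform-in-$s$ bound on $E(w(s),\partial_s w(s))$; combined with the standard Merle--Zaag estimates controlling $\int_s^{s+1}\!\int(\partial_s w)^2\rho$ and the $L^{p+1}_\rho$ norm through the equation, this yields the weighted bound $\|w_{x_0}(s)\|_{H^1_\rho}+\|\partial_s w_{x_0}(s)\|_{L^2_\rho}\le K(x_0)$, which is exactly assertion~$ii)$.

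For assertion~$i)$, the non-characteristic hypothesis~(\ref{4}) places $(x_0,T(x_0))$ at the tip of a nondegenerate cone $\mathcal{C}_{x_0,T(x_0),\delta_0}$ on which $u$ is defined; by finite speed of propagation this provides control of $w_{x_0}$ up to the light-cone boundary $y=\pm1$, upgrading the weighted bound into the unweighted estimate $\|w_{x_0}(s)\|_{H^1(-1,1)}+\|\partial_s w_{x_0}(s)\|_{L^2(-1,1)}\le K(x_0)$. For the strict lower bound I would argue by contradiction: there is a threshold $\epsilon_0(p)>0$ such that if the unweighted norm drops below $\epsilon_0(p)$ at some large $s$, then the corrected Lyapunov functional forces $w_{x_0}(s)\to 0$, after which a small-data trapping argument---again perturbed only by the exponentially small terms above---shows that $u$ extends past $T(x_0)$ in a neighborhood of $x_0$, contradicting the definition of $T(x_0)$ as the blow-up time.

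The main obstacle will be the uniformity of the perturbation estimates near the boundary $y=\pm1$. The factor $\partial_y w$ multiplying $\frac{e^{-s}(d-1)}{\phi(x_0)+ye^{-s}}$ is only square-integrable against the degenerate weight $\rho=(1-y^2)^{\frac{2}{p-1}}$, so one must verify that the correction terms adjoined to $E$ stay integrable and that their $s$-derivatives never overwhelm the dissipation; this is precisely where the subcriticality condition~(\ref{3.5}) and the $L^1$ integrability of $\rho$ enter. Producing the positive lower bound $\epsilon_0(p)$ independent of $x_0$ is the other delicate point, as the trapping-to-zero mechanism must survive the perturbation uniformly in $s$.
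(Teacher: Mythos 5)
Your proposal follows essentially the same route as the paper: exponentially decaying perturbation estimates for the extra terms in (\ref{equa}), a Lyapunov functional obtained by adjoining exponentially small corrections to $E$ in (\ref{15}), the resulting weighted bound giving part $ii)$, the non-characteristic covering argument of Merle--Zaag \cite{MR2147056} to upgrade to the unweighted bound in part $i)$, and small-data well-posedness in $H^1\times L^2$ for the lower bound $\epsilon_0(p)$. The paper's own proof is exactly this scheme, with most steps delegated to citations of \cite{HZ} and \cite{MR2147056} rather than carried out in detail.
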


Using the bound in Theorem \ref{T0}, together with the compactness procedure based on the existence of a Lyapunov for equation (\ref{equa}) (which is a perturbation of the functional $E(w,\partial_s w)$ defined in (\ref{15})), we derive the following:

\begin{theorr}{\label{T00}}{\bf(Blow-up behavior in similarity variables outside the origin)} 
\\
$i)$  {\bf (Non-characteristic case)} 
The set $\mathcal{R}\cap \mathbb{R}_+^*$
 is open, and $T$ is of class $C^1$ on that set. Moreover, there exist $\mu_0>0$ and $C_0>0$ such that for all $x_0\in \mathcal{R}\cap \mathbb{R}_+^*$, there exist $\theta (x_0) =\pm 1$ and $s_0(x_0)\ge -\log T(x_0)$ such that for all $s\ge s_0$:
 \begin{eqnarray}\label{31}
\Big|\Big|\begin{pmatrix} w(s)\\\partial_s w(s) \end{pmatrix} - \theta(x_0) \begin{pmatrix} \kappa(T'(x_0)\sqrt{a(x_0)})\\0\end{pmatrix} \Big|\Big|_{{H^1_{\rho}\times L^2_\rho} }\le C_0 e^{-\mu_0(s-s^*)},
 \end{eqnarray}
 where $w=w_{x_0}$. Moreover, $E(w,\partial_s w)\rightarrow E(\kappa_0,0) $ as $s\rightarrow \infty.$\\
 $ii)$ {\bf (Characteristic case)} If $x_0\in \mathcal{S}\cap \mathbb{R}_+^*$, there is $\xi_0 (x_0) \in \mathbb{R}$ such that:
  \begin{eqnarray}\label{4.3}
\Big|\Big|\begin{pmatrix} w(s)\\\partial_s w(s) \end{pmatrix} -\theta_1 \begin{pmatrix} \sum_{i=1}^{k(x_0)}      (-1)^{i+1}\kappa(\hat d_i (s),.)\\0\end{pmatrix} \Big|\Big|_{H^1_{\rho}\times L^2_\rho}\rightarrow 0,
 \end{eqnarray}
 where $w=w_{x_0}$ and $E(w,\partial_s w)\rightarrow k(x_0) E(\kappa_0,0)  $ as $s\rightarrow \infty$, for some $k(x_0) \ge 2$, $\theta_i=\theta_1 (-1)^{i+1}$, $\theta_1=\pm 1$, and continuous $\hat d_i (s)=-\tan \hat \xi_i (s)$ with 
 \begin{eqnarray}\label{4.6}\hat  \xi_i (s)=\bar  \xi_i (s)+  \xi_0,
  \end{eqnarray}
 where $\bar  \xi_i(s)$ is introduced in (\ref{4.1}).
\end{theorr}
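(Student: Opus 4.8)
The plan is to exploit the change of variables (\ref{6.5}) and the similarity variables (\ref{nchg}), which turn (\ref{waveq}) into equation (\ref{equa}), and to observe that since we work at a point $x_0\neq 0$ we have $\phi(x_0)\neq 0$. Consequently the radial term $e^{-s}\frac{d-1}{\phi(x_0)+ye^{-s}}\partial_y w$ is bounded by $Ce^{-s}|\partial_y w|$ for $s$ large, and the remaining terms $(b(x)-b(x_0))|w|^{p-1}w$, the $f$-contribution $e^{-\frac{2ps}{p-1}}f(e^{\frac{2s}{p-1}}w)$, and the $G$-contribution are all lower-order perturbations tending to $0$ as $s\to\infty$ by (\ref{a}), (\ref{condition}) and (\ref{Hg}). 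Thus (\ref{equa}) is an exponentially small perturbation of the similarity-variable equation for the unperturbed problem (\ref{equa6.5}), which places us exactly in the framework of Hamza and Zaag \cite{HZ12}, itself adapting Merle and Zaag.

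First I would establish a Lyapunov functional: a suitable perturbation of $E(w,\partial_s w)$ from (\ref{15}) obtained by adding correction terms that absorb the perturbations, so that $\frac{d}{ds}E\le -c\int_{-1}^{1}(\partial_s w)^2\rho\,dy+(\text{integrable error})$. Combined with the uniform bound of Theorem \ref{T0}, this yields $\int_{-\log T(x_0)}^{\infty}\int_{-1}^{1}(\partial_s w)^2\rho\,dy\,ds<+\infty$, so that $\partial_s w\to 0$ along a sequence $s_n\to\infty$. A compactness argument then shows that any subsequential limit of $(w(s_n),\partial_s w(s_n))$ in $H^1_\rho\times L^2_\rho$ is a stationary solution of the limiting equation; by the classification of these solutions (Merle--Zaag), the limit is either $0$, or $\pm\kappa(\hat d,\cdot)$ for some $\hat d\in(-1,1)$, or a superposition of such solitons.

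For the non-characteristic case $(i)$, the lower bound $\epsilon_0(p)>0$ of Theorem \ref{T0}$(i)$ rules out the zero limit, so the trajectory approaches a single soliton. The exponential rate $e^{-\mu_0(s-s^*)}$ in (\ref{31}) follows from a modulation argument: writing $w=\theta\kappa(\hat d(s),\cdot)+q$ with $q$ orthogonal to the two null directions generated by the symmetries, one linearizes (\ref{equa}) around the soliton, uses the spectral gap of the linearized operator on the orthogonal complement, and closes an energy estimate absorbing the exponentially small perturbations. The parameter is identified as $\hat d=T'(x_0)\sqrt{a(x_0)}$ through the geometry of $\Gamma$: by (\ref{T}), the slope of the blow-up curve in the variable $X=\phi(x)$ is $T'_U(\phi(x_0))=T'(x_0)\sqrt{a(x_0)}\in(-1,1)$, which fixes the center of the soliton. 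The openness of $\mathcal{R}\cap\mathbb{R}_+^*$ and the $C^1$ regularity of $T$ there then follow, as in Merle--Zaag, from the stability of condition (\ref{4}) under this convergence and from the continuity of $x_0\mapsto T'(x_0)\sqrt{a(x_0)}$.

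For the characteristic case $(ii)$, the absence of a uniform lower bound allows several solitons to coexist. Here I would prove energy quantization $E(w,\partial_s w)\to k\,E(\kappa_0,0)$ for an integer $k=k(x_0)\ge 2$, together with a multi-bump decomposition showing that $w(s)$ approaches $\theta_1\sum_{i=1}^{k}(-1)^{i+1}\kappa(\hat d_i(s),\cdot)$ with well-separated centers. Projecting the equation onto the modulation directions of each bump yields, to leading order, the repulsive ODE system (\ref{4.1}) for the phases $\hat\xi_i(s)$ defined by $\hat d_i(s)=-\tan\hat\xi_i(s)$, whose zero-center-of-mass solution separates logarithmically, with the common shift $\xi_0$ accounting for translation invariance. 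This is the \textbf{main obstacle}: one must control simultaneously the soliton-soliton interactions and the interaction of each soliton with the perturbations inherited from the nonconstant coefficient $a(x)$ and from $f,g$, which, though exponentially small at the level of (\ref{equa}), are not negligible after the rescaling that isolates the repulsive dynamics. The argument follows Merle--Zaag \cite{MR2931219}, \cite{Mz12} and Hamza--Zaag \cite{HZ12}, the new point being to verify throughout that these perturbations remain subordinate to the leading terms.
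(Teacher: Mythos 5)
Your proposal follows essentially the same route as the paper: reduce to the constant-speed setting via the change of variables (\ref{6.5}), treat the term $e^{-s}\frac{d-1}{\phi(x_0)+ye^{-s}}\partial_y w$ together with the $f$-, $G$- and $\beta$-contributions as exponentially small perturbations when $x_0\neq 0$, construct the perturbed Lyapunov functional, and then run the Merle--Zaag / Hamza--Zaag / C\^ote--Zaag machinery, identifying the soliton parameter through $T_U'(X_0)=T'(x_0)\sqrt{a(x_0)}$ as in (\ref{T-phi}). This is exactly the paper's argument, which is presented there as a citation of \cite{HZ}, \cite{MR2362418}, \cite{MR2415473} and \cite{CZ12} once Proposition \ref{prp} is available; your version merely spells out more of the internals (modulation, spectral gap, energy quantization) of those cited works.
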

\noindent {\bf Remark:} Estimate (\ref{31}) holds in $H^1\times L^2 (-1,1)$, thanks to the covering argument introduced by Merle and Zaag in \cite{MR2147056}. From the Sobolev embedding, it holds also in $L^\infty\times L^2$. \\
\noindent {\bf Remark:} Following the strategy of C\^{o}te and Zaag in \cite{CZ12}, refined in \cite{HZ2019} by Hamza and Zaag, for every $k_0\ge 2$ and $\xi_0\in \mathbb{R}$, 
 we are able to construct examples of solutions to equation (\ref{waveq}) showing a characteristic-point and obeying the modality described in item $ii)$ of Theorem \ref{T00}.
\medskip

Going back to $u(x,t)$ thanks to (\ref{nchg}), we have the following corollary:
\begin{corr}{\bf (Blow-up profile for equation (\ref{waveq}) in the non-characteristic case outside the origin) }$\; $\\
If $\mathcal{R}\cap \mathbb{R}_+^*$, then we have
 \begin{eqnarray*}
u(x,t)   \sim   \frac{\theta(x_0) \kappa_0 (1-a(x_0)|T'(x_0)|^2)^{\frac{1}{p-1}}}{ (T(x_0)-t+T'(x_0)\sqrt{a(x_0)}(\phi(x)-\phi(x_0))^\frac{2}{p-1} }\mbox{ as } t \rightarrow T(x_0)
\end{eqnarray*}
 uniformly for $ x$ such that $ |\phi(x)-\phi(x_0) |< T(x_0)-t.$
\label{cor3}
\end{corr}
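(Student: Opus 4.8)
The plan is to obtain the profile of $u$ simply by inverting the similarity change of variables (\ref{nchg}) and inserting the convergence of $w_{x_0}$ furnished by Theorem \ref{T00}. Fix $x_0\in\mathcal R\cap\mathbb R_+^*$ and let $t\to T(x_0)$, so that $s=-\log(T(x_0)-t)\to+\infty$. Inverting (\ref{nchg}) gives
$$u(x,t)=(T(x_0)-t)^{-\frac{2}{p-1}}\,w_{x_0}(y,s),\qquad y=\frac{\phi(x)-\phi(x_0)}{T(x_0)-t},$$
with $s$ as in (\ref{nchg}). The admissible region $|\phi(x)-\phi(x_0)|<T(x_0)-t$ is exactly $\{|y|<1\}$, and at fixed $t$ the map $x\mapsto y$ is a bijection onto $(-1,1)$; hence uniformity in $x$ over this region is equivalent to uniformity in $y\in(-1,1)$, and it suffices to understand $w_{x_0}(\cdot,s)$ uniformly on $(-1,1)$ as $s\to\infty$.

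By Theorem \ref{T00} $i)$ together with the first Remark following it, estimate (\ref{31}) holds not only in $H^1_\rho\times L^2_\rho$ but in the unweighted space $H^1\times L^2(-1,1)$; the one-dimensional Sobolev embedding $H^1(-1,1)\hookrightarrow L^\infty(-1,1)$ then upgrades it to
$$\sup_{|y|<1}\left|\,w_{x_0}(y,s)-\theta(x_0)\,\kappa(T'(x_0)\sqrt{a(x_0)},y)\,\right|\longrightarrow 0\quad\text{as }s\to\infty.$$
Recalling the explicit soliton and setting $\hat d=T'(x_0)\sqrt{a(x_0)}$, so that $1-\hat d^{\,2}=1-a(x_0)|T'(x_0)|^2$, we have
$$\kappa(\hat d,y)=\kappa_0\,\frac{(1-a(x_0)|T'(x_0)|^2)^{\frac{1}{p-1}}}{(1+\hat d\,y)^{\frac{2}{p-1}}}.$$

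It remains to substitute and simplify. Using $y=(\phi(x)-\phi(x_0))/(T(x_0)-t)$ one finds
$$1+\hat d\,y=\frac{(T(x_0)-t)+T'(x_0)\sqrt{a(x_0)}\,(\phi(x)-\phi(x_0))}{T(x_0)-t}.$$
Raising to the power $-\tfrac{2}{p-1}$ produces a factor $(T(x_0)-t)^{\frac{2}{p-1}}$ which cancels the prefactor $(T(x_0)-t)^{-\frac{2}{p-1}}$ standing in front of $w_{x_0}$; what survives is exactly $\theta(x_0)\kappa_0(1-a(x_0)|T'(x_0)|^2)^{\frac{1}{p-1}}$ divided by $(\,(T(x_0)-t)+T'(x_0)\sqrt{a(x_0)}(\phi(x)-\phi(x_0))\,)^{\frac{2}{p-1}}$, which is the asserted right-hand side.

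The point that makes this a genuine asymptotic equivalence, and the only real subtlety, is uniform control up to the light-cone boundary $|y|=1$. Since $x_0$ is non-characteristic, $|\hat d|=|T'(x_0)\sqrt{a(x_0)}|<1$ — indeed, convergence in $H^1_\rho$ to $\theta(x_0)\kappa(\hat d,\cdot)$ already forces $|\hat d|<1$, because $\kappa(\hat d,\cdot)$ belongs to $H^1_\rho$ only in that range. Consequently $1+\hat d\,y$ stays in a fixed compact subinterval of $(0,2)$ for $|y|\le 1$ while $(1-\hat d^{\,2})^{\frac{1}{p-1}}>0$, so the limiting profile is bounded and bounded away from zero on $[-1,1]$; dividing the uniform convergence of the second display by this nonvanishing profile yields $u(x,t)/(\mathrm{RHS})\to 1$ uniformly in $y\in(-1,1)$, hence uniformly in the stated region. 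The weighted bound (\ref{31}) alone would \emph{not} suffice near $|y|=1$, where the weight $\rho=(1-y^2)^{\frac{2}{p-1}}$ degenerates; this is precisely why the upgrade to the unweighted $H^1(-1,1)$ norm via the covering argument of Merle and Zaag, invoked in the Remark, is needed before the Sobolev embedding can be applied.
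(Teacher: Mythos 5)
Your proof is correct and follows essentially the same route as the paper's: invert the self-similar change of variables (\ref{nchg}), insert the convergence to the soliton profile from Theorem \ref{T00} $i)$ upgraded to $L^\infty(-1,1)$ via the covering argument and Sobolev embedding (as in the Remark after that theorem), and simplify the resulting expression. The paper merely phrases the intermediate step for $U(X,t)$ and $T_U$ before translating back through (\ref{T-phi}), which is a notational difference only; your extra justification of the uniform equivalence (the limit profile is bounded away from zero on $[-1,1]$ because $|T'(x_0)\sqrt{a(x_0)}|<1$ at a non-characteristic point) correctly fills in a detail the paper leaves implicit.
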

We also obtain the regularity of the blow-up set:
\begin{propp}\label{prop0}{\bf(Regularity of the blow-up set outside the origin)}$\,$\\\label{blow-upset}
$i)$  {\bf (Non-characteristic case)} It holds that $\mathcal{R}\neq \emptyset $, $\mathcal{R}  \backslash \{0\}$ is an open set, and $x\mapsto T(x)$ is of class $C^1$ on $\mathcal{R} \backslash \{0\}$ and for all $x\in \mathcal{R} \backslash \{0\}$, $| T'(x)   |<\frac{1}{\sqrt{a(x)}}$.\\
$ii)$ {\bf (Characteristic case)} Any $x_0\in \mathcal{S} \backslash \{0\}$ is isolated. In addition, if $x_0 \in  \mathcal{S} \backslash \{0\}$ with $k(x_0)$ solitons and $\xi_0(x_0) \in \mathbb{R}$ as center of mass of the solitons' center as shown in (\ref{4.3}) and (\ref{4.6}), then
 \begin{align}\label{6.1}
T'(x)+ \frac{\theta (x)}{\sqrt{a(x)}} &\sim \frac{\theta (x) \nu e^{-2 \theta (x) \xi_0 (x_0) } 
}{\sqrt{a(x_0)}  |\log |x-x_0||^\frac{(k(x_0)-1)(p-1)}{2}},
 \\ \label{6.2}
T(x)-T(x_0)+ |\phi(x)-\phi(x_0)| &\sim\frac{\nu  |\phi(x)-\phi(x_0)|e^{-2  \theta (x)  \xi_0 (x_0) } }{  |\log |x-x_0||^\frac{(k(x_0)-1)(p-1)}{2}},
\end{align}
 as $x\rightarrow x_0$, where $\theta(x)=\frac{x-x_0}{ |x-x_0| }$ and $\nu=\nu (p)>0$.
\end{propp}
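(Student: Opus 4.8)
The plan is to transfer everything to the variable $X=\phi(x)$, where equation (\ref{eq6.5}) is a perturbation of the one–dimensional semilinear wave equation (\ref{equa6.5}) treated by Merle--Zaag and by Hamza--Zaag, and then to pull the known regularity statements for the auxiliary blow-up time $T_U$ back to $T$ through the relation $T(x)=T_U(\phi(x))$ of (\ref{T}). Since $x_0\neq 0$ forces $X_0=\phi(x_0)\neq 0$, the singular drift $\frac{d-1}{X}\partial_X U$ is a genuine lower-order term near $X_0$, so the Merle--Zaag / Hamza--Zaag machinery applies to (\ref{eq6.5}) near $X_0$; this is precisely the information already packaged in Theorems \ref{T0} and \ref{T00}.

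For item $(i)$ I would first observe that openness of $\mathcal{R}\cap\mathbb{R}_+^*$ and the $C^1$ regularity of $T$ on that set are already granted by Theorem \ref{T00}$(i)$. Nonemptiness of $\mathcal{R}$ follows once item $(ii)$ is established, since $\mathcal{S}\setminus\{0\}$ is then discrete and cannot exhaust a neighborhood of a blow-up point. The bound $|T'(x)|<\frac{1}{\sqrt{a(x)}}$ is then immediate: differentiating $T(x)=T_U(\phi(x))$ and using $\phi'(x)=\frac{1}{\sqrt{a(x)}}$ gives $T'(x)=\frac{T_U'(\phi(x))}{\sqrt{a(x)}}$, while the non-characteristic condition in the $X$-variable is $|T_U'(\phi(x))|<1$.

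The core of the work is item $(ii)$. Since $\phi$ is a $C^1$-diffeomorphism on $\mathbb{R}_+^*$ with $\phi'>0$, it carries the isolated characteristic points $X_0=\phi(x_0)$ of the reduced equation to isolated points $x_0$, which gives the discreteness of $\mathcal{S}\setminus\{0\}$. For the asymptotics, I would start from the known expansions for $T_U$ near a characteristic point $X_0$ (the analogue of (\ref{6.1})--(\ref{6.2}) with $a\equiv 1$), namely $T_U'(X)+\theta\sim \theta\,\nu\,e^{-2\theta\xi_0}\,|\log|X-X_0||^{-(k-1)(p-1)/2}$ and $T_U(X)-T_U(X_0)+|X-X_0|\sim \nu\,|X-X_0|\,e^{-2\theta\xi_0}\,|\log|X-X_0||^{-(k-1)(p-1)/2}$, where $\theta=\frac{X-X_0}{|X-X_0|}$ and $k=k(x_0)$. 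Substituting $X=\phi(x)$, $X_0=\phi(x_0)$, using $T'(x)+\frac{\theta(x)}{\sqrt{a(x)}}=\frac{1}{\sqrt{a(x)}}\big(T_U'(\phi(x))+\theta(x)\big)$, and noting that $\phi(x)-\phi(x_0)$ has the sign of $x-x_0$ (so the two choices of $\theta$ coincide, $\theta(x)=\frac{x-x_0}{|x-x_0|}$) then yields (\ref{6.1})--(\ref{6.2}), provided the logarithmic factor is correctly translated.

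The main---and only genuinely new---obstacle is this translation of the logarithm under the nonconstant change of variables. Near $x_0\neq 0$ the mean value theorem gives $|\phi(x)-\phi(x_0)|\sim \frac{|x-x_0|}{\sqrt{a(x_0)}}$, whence $\log|\phi(x)-\phi(x_0)|=\log|x-x_0|+\log\frac{1}{\sqrt{a(x_0)}}+o(1)$; since $\log|x-x_0|\to-\infty$, the bounded additive constant is negligible and $|\log|\phi(x)-\phi(x_0)||\sim|\log|x-x_0||$, so $|\log|X-X_0||^{(k-1)(p-1)/2}\sim|\log|x-x_0||^{(k-1)(p-1)/2}$. Combining this equivalence with the continuity of $a$, which lets us replace $\sqrt{a(x)}$ by $\sqrt{a(x_0)}$ in the leading order of (\ref{6.1}), completes the derivation. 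I expect the remaining verifications to be routine once this equivalence is in hand.
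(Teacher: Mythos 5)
Your proposal is correct and follows essentially the same route as the paper: reduce to the $X=\phi(x)$ variable, invoke the Hamza--Zaag/Merle--Zaag results for the reduced equation (openness, $C^1$ regularity and $|T_U'|<1$ on the non-characteristic set; isolatedness and the two expansions for $T_U$ at characteristic points), and pull everything back through $T(x)=T_U(\phi(x))$. Your explicit verification that $|\log|\phi(x)-\phi(x_0)||\sim|\log|x-x_0||$ and that $\sqrt{a(x)}$ may be replaced by $\sqrt{a(x_0)}$ at leading order fills in precisely the step the paper compresses into ``using the correspondance between $x$ and $X$''; the only other (harmless) deviation is that you deduce $\mathcal{R}\neq \emptyset$ from the discreteness of $\mathcal{S}\setminus\{0\}$, whereas the paper gets it from the directly cited strategy of the non-characteristic case.
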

\noindent{\bf Remark:} If $a$ is Holder continuous, then we may replace $\frac{\theta (x)}{\sqrt{a(x)}}$ by $\frac{\theta (x)}{\sqrt{a(x_0)}}$ in (\ref{6.1}), and replace  (\ref{6.2}) by
 \begin{align}
 T(x)-T(x_0)+  \frac{|x-x_0|}{\sqrt{a(x_0)}} &\sim\frac{ \nu  \frac{| x-x_0|}{\sqrt{a(x_0)}}e^{-2  \theta (x)  \xi_0 (x_0)} }{  |\log |x-x_0||^\frac{(k(x_0)-1)(p-1)}{2}},
\end{align}

\subsubsection{Behavior at the origin}
When $x_0=0$, we have $\phi(x_0)=0$, hence the term $\frac{e^{-s}(d-1)}{\phi(x_0)+y e^{-s}}\partial_y w_{_0}=\frac{d-1}{y}\partial_y w_{_0}$ in equation (\ref{equa}) and can no longer be treated as a perturbation.\\
Accordingly, we may write the second and first order space derivatives in the following divergence form:
\begin{equation*} 
(1-y^2)\partial_y^2 w_{_0}-2 \frac{p+1}{p-1} y \partial_y w_{_0}+\frac{d-1}{y}\partial_y w_{_0}=\frac{1}{\rho_{_0}(y)}  \partial_y (\rho_{_0}(1-y^2)\partial_y w_{_0})
\end{equation*}
where 
\begin{equation} \label{rho}
\rho_{_0}(y)=(1-y^2)^{\frac{2}{p-1}-\frac{d-1}{2}} y^{d-1}.
\end{equation}

\medskip

For the case where $(f,g)\equiv(0,0)$,  in one space dimension, we introduce the functional 
\begin{equation}\label{1500}
 E_{_0}(w,\partial_s w)=\int_{0}^{1} \left( \frac{1}{2} |\partial_s w|^2+\frac{1}{2} |\partial_y w|^2 (1-y^2)+\frac{p+1}{(p-1)^2}|w|^2-\frac{\beta(0)}{p+1}|w|^{p+1}\right) \rho_{_0} dy.
\end{equation}

Note first that $E_{_0}$ is defined if $(w, \partial_s w)\in H^1_{\rho_{_0}}\times L^2_{\rho_{_0}}$, where the norms $L^2_{\rho_{_0}}$ and $H^1_{\rho_{_0}}$ are defined by the same way as in (\ref{L2}) and (\ref{H1}), but only on the domain $(0,1)$ and with weight $\rho_{_0}$ given in (\ref{rho}). 

Adapting the techniques introduced by Antonini and Merle (See Section $2$ page $1144$ in \cite{AM01}) to our case where $\rho_{_0}$ is given by (\ref{rho}), we see that 
\begin{equation*}\label{}
 \frac{d}{ds}E_{_0}(w,\partial_s w)=(d-1-\frac{4}{p-1})\int_{0}^{1} (\partial_s w)^2\frac{\rho_0}{1-y^2} dy,
\end{equation*}
as $d$ satisfies (\ref{3.3})-(\ref{3.5}), $E_{_0}$ (\ref{1500}) is decreasing and is a Lyapunov functional. Another way to justify this: the functional in (\ref{1500}) is simply the radial version of the functional of \cite{AM01}
considered in the space $\mathbb{R}^d$ (which is not the physical space $\mathbb{R}^N$).\\

Considering $w(y,s)$ as a (non necessarily radial) function defined in $\mathbb{R}^d$, we may use the perturbative
techniques of Hamza and Zaag in \cite{HZ12} to derive the following:
\begin{theorr}{\label{thorigine}}{\bf(Bound in similarity variables at the origin in the non-characteristic case)} 
 If $u$ is a solution of (\ref{equa}) with blow-up surface $\Gamma \,:\, \{x\rightarrow T(x)\},$ and if $0$ is a non-characteristic point,  then, for $s$ large enough:
$$0 < \epsilon_0(p)\le||w_{_0}(s)||_{H^1_{\rho_{_0}}}+||\partial_s w_{_0}(s)||_{L^2_{\rho_{_0}}}\le K,$$
$$||w_{_0}(s)||_{H^1_{r^{d-1}}(0,1)}+||\partial_s w_{_0}(s)||_{L_{r^{d-1}}^2(0,1)}\le K.$$
\end{theorr}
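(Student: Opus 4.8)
The plan is to exploit the divergence structure exhibited just before the statement, which shows that at the origin the principal part of equation (\ref{equa}) is the self-similar operator associated to the pure-power wave equation in the \emph{effective} dimension $d$, restricted to radial functions. Since (\ref{3.3}) forces $d\in\mathbb{N}^*$, I would first regard $w_{_0}(\cdot,s)$ as a radial function on the unit ball of $\mathbb{R}^d$, and recognize $E_{_0}$ in (\ref{1500}) as the radial trace of the Antonini--Merle energy of \cite{AM01} in $\mathbb{R}^d$, with the weight $\rho_{_0}$ in (\ref{rho}) being the radial Jacobian $y^{d-1}$ times the self-similar weight $(1-y^2)^{\frac{2}{p-1}-\frac{d-1}{2}}$. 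The whole point of this reinterpretation is that the singular first-order term $\frac{d-1}{y}\partial_y w_{_0}$, which cannot be treated as a perturbation, is absorbed into the divergence form, so that the remaining contributions coming from $f$, $g$ and from $(\beta(x)-\beta(0))|w|^{p-1}w$ become genuinely lower order and can be handled by the perturbative scheme of Hamza and Zaag \cite{HZ12}.

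For the upper bound I would proceed as in Antonini--Merle. The Lyapunov identity recorded before the statement gives, under (\ref{3.5}), that $E_{_0}(w_{_0}(s))$ is nonincreasing (up to controllable perturbative corrections), hence
\[\int_{s_0}^{\infty}\int_0^1 (\partial_s w_{_0})^2\,\frac{\rho_{_0}}{1-y^2}\,dy\,ds<+\infty\]
once one knows that $E_{_0}$ is bounded from below. Establishing this lower bound is the technical core: I would run the standard argument pairing the energy with the functional $\int_0^1 w_{_0}\,\partial_s w_{_0}\,\rho_{_0}\,dy$, using the finite speed of propagation in the backward cone and the non-characteristic condition (\ref{4}) to prevent concentration, and then invoke the Hardy--Sobolev inequality of \cite{MZ05} in its $\rho_{_0}$-weighted radial form to control $\int_0^1|w_{_0}|^{p+1}\rho_{_0}\,dy$ by $\|w_{_0}\|_{H^1_{\rho_{_0}}}$. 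Feeding these estimates back into the expression of $E_{_0}$ yields $\|w_{_0}(s)\|_{H^1_{\rho_{_0}}}+\|\partial_s w_{_0}(s)\|_{L^2_{\rho_{_0}}}\le K$.

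The lower bound $\epsilon_0(p)\le\|w_{_0}(s)\|_{H^1_{\rho_{_0}}}+\|\partial_s w_{_0}(s)\|_{L^2_{\rho_{_0}}}$ is a no-return threshold argument: were this quantity smaller than a universal $\epsilon_0(p)$ for some large $s$, the local Cauchy theory in similarity variables would force $w_{_0}\to 0$ and the solution $u$ to extend past $T(0)$ inside the cone, contradicting that $0$ is a blow-up point. Finally, to upgrade the weighted bound to the stronger estimate with weight $r^{d-1}$ on $(0,1)$ --- which is genuinely stronger near $y=1$, where $\rho_{_0}$ degenerates --- I would use the covering argument of Merle and Zaag: points with $y$ close to $1$ correspond, through the change of variables (\ref{nchg}) centered at nearby points $x_0\neq 0$, to interior points of cones over non-characteristic points, where Theorem \ref{T0} supplies the unweighted $H^1\times L^2$ bound. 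Patching these local bounds over a neighborhood of $y=1$ replaces the degenerate factor $(1-y^2)^{\frac{2}{p-1}-\frac{d-1}{2}}$ by a constant and produces the $r^{d-1}$-weighted estimate.

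The main obstacle, as the discussion preceding the statement already signals, is the loss of the perturbative nature of the first-order term at the origin; everything hinges on the fact that, thanks to the integrality of $d$ and the divergence form with weight $\rho_{_0}$, the problem is exactly the radial Antonini--Merle problem in $\mathbb{R}^d$, so that the subcriticality (\ref{3.5}) --- equivalently $d-1-\frac{4}{p-1}\le 0$ --- is precisely what makes $E_{_0}$ a Lyapunov functional and what allows the whole machinery to be transplanted. The secondary difficulties, namely the boundedness from below of $E_{_0}$ and the control near $y=1$, are resolved respectively by the concentration/non-characteristic argument and by the covering.
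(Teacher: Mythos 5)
Your overall plan starts on the same track as the paper: view $w_{_0}$ as a radial function on the unit ball of $\mathbb{R}^d$ (this is exactly where the integrality of $d$ from (\ref{3.3}) and the subconformality (\ref{3.5}) enter), recognize $E_{_0}$ as the radial Antonini--Merle functional with weight $\rho_{_0}$, absorb the singular term $\frac{d-1}{y}\partial_y w_{_0}$ into the divergence form, and treat $f$, $G$ and $\beta(\cdot)-\beta(0)$ by the perturbative scheme of Hamza--Zaag; the no-return argument for the lower bound $\epsilon_0(p)$ is also the standard one. One correction on a secondary point: the lower bound on the energy does \emph{not} come from the non-characteristic condition ``preventing concentration''. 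It comes from the Antonini--Merle blow-up criterion (the analogue of item $(ii)$ of Proposition \ref{lyapunov}): a solution whose Lyapunov functional becomes negative cannot exist globally in $s$, whereas $w_{_0}$ is defined for all $s$. The non-characteristic hypothesis is needed only for the $\epsilon_0(p)$ bound and for the second ($r^{d-1}$-weighted) estimate; the weighted upper bound holds at characteristic points too, as in Theorem \ref{T0}$(ii)$.

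The genuine gap is in your last step, the covering argument. You propose to control the region $y$ close to $1$ by passing to the \emph{radial} similarity variables (\ref{nchg}) centered at nearby points $x_0\neq 0$ and invoking Theorem \ref{T0} there. This fails quantitatively: to cover the section of the backward cone of the origin at time $s$, the centers must be taken at distance $X_0=\phi(x_0)\sim e^{-s}$ from the origin. For such centers, the term $e^{-s}(d-1)\,\partial_y w/(X_0+ye^{-s})$ in (\ref{equa}) is of order one on the cylinder (and even singular inside it when $X_0e^{s}<1$), so it is no longer a perturbation; consistently, Proposition \ref{lyapunov} and Proposition \ref{prp} require $s\ge -4\log X_0$, i.e. $X_0\ge e^{-s/4}$, which is violated when $X_0\sim e^{-s}$, and the constants $K(x_0)$, $s_0(x_0)$ in Theorem \ref{T0} are not uniform as $x_0\to 0$. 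Moreover, the nearby points are not known to be non-characteristic (which Theorem \ref{T0}$(i)$ requires): openness of $\mathcal{R}$ is only established away from $0$. The paper's proof avoids this trap precisely by treating $w_{_0}$ as a \emph{not necessarily radial} function on the unit ball of $\mathbb{R}^d$ and applying the multi-dimensional machinery of \cite{HZ}, \cite{HZ12} and \cite{MR2147056} there: the covering then uses $d$-dimensional similarity variables centered at points of $\mathbb{R}^d$ close to the origin, whose equation involves the full Laplacian and hence no singular first-order term, so uniform weighted bounds at those centers are available and the unweighted (i.e. $r^{d-1}$-weighted, after reduction to radial coordinates) estimate follows. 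In short, the reduction to $\mathbb{R}^d$ must be exploited not only for the Lyapunov functional but for the entire argument, in particular for the covering; a purely radial covering based on Theorem \ref{T0} cannot yield the second estimate of Theorem \ref{thorigine}.
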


\medskip

\subsection{ Strategy of the proof of the results}
Thanks to the transformation (\ref{6.5}), we reduce to the case where $a(x) \equiv 1$ in the remaining part of the paper. In comparison with the paper by Hamza and Zaag \cite{HZ}, our equation allows a non-constant term in front of the reaction-term $|u|^{p-1}u$, namely $\beta(x) \not \equiv 1$. As in \cite{HZ}, the most delicate point is to obtain a Lyapunov functional in similarity variables defined in (\ref{nchg}). Thus, in the following section, we focus on the proof of the existence of a Lyapunov functional for equation (\ref{equa}) in the first subsection, then we give some hints on how to adapt the strategy of \cite{HZ} to derive the blow-up behavior outside and at the origin in the second and third subsections.\\

\noindent{\bf Acknowledgments}\\
The authors would like to thank Professor Mohamed Ali Hamza, for his helpful advices during the preparation of this paper, which greatly improved the presentation of the results.

\noindent This material is based upon work supported by Tamkeen under the NYU Abu Dhabi Research Institute grant CG002.

\section{Proof of the results}
We prove the blow-up results for (\ref{eq6.5}) which we recall in the following:
\begin{equation} \left\{
\begin{array}{l}
\displaystyle\partial^2_{t}U =\partial^2_{X}U+\frac{d-1}{X}\partial_{X}U +\beta(X) |U|^{p-1}U+f(U)+G(X,t,\partial_{X}U,\partial_t U) , \mbox{ for } X>0\\
U_{X}(0,t) = 0,\\
U(0)=U_{0} \mbox{ and }  U_{t}(0) = U_{1},
\end{array}
\right . \label{}
\end{equation}

with
\begin{align*}
|f(U)|&\le M(1+|U|^q ),\mbox{  for all }U\in\mathbb{R}\mbox{  with }  (q<p, M>0),\\
|G(X,t,\partial_{X}U,\partial_t U)  |&\le M(1+|\partial_{X}U|+|\partial_t U|).
\end{align*}

In fact, this is almost the same equation as in \cite{HZ} except for the coefficient $\beta(X)$ in front of $|U|^{p-1}U$ which was taken identically equal to $1$ in \cite{HZ}. For that reason, we follow the strategy of \cite{HZ} and focus mainly on the treatment of the term $\beta(X) |U|^{p-1}U$. Given some $X_0=\phi(x_0) \in \Bbb R_+,$ where $\phi$ was defined in (\ref{phi}), we introduce the following self-similar change of variables, as in (\ref{nchg}):
\begin{equation}\label{trans_auto}
w_{X_0}(y,s) =(T_U(X_0)-t)^\frac{2}{p-1}U(X,t), \quad  y=\frac{X-X_0}{T_U(X_0)-t}, \quad
s=-\log(T_U(X_0)-t).
\end{equation}
Note that the curve $T_U$ of $U$ is given by the curve $T$ of $u$, in fact :
$$T_U(X)=T(x)\mbox { with } X=\phi(x).$$
This change of variables transforms the backward light cone with vertex $(X_0, T_U(X_0))$ into the infinite cylinder $(y,s)\in (-1,1) \times [-\log T_U(X_0),+\infty).$ The function $w_{X_0}$ (we write $w$ for simplicity) satisfies the following equation for all $|y|<1$ and $s\ge -\log T_U(X_0)$:

\begin{eqnarray} \label{eqwbeta}
 \partial^2_{s} w &=&(1-y^2)\partial_y^2 w-2 \frac{p+1}{p-1} y \partial_y w-2 \frac{p+1}{(p-1)^2}w+ \beta (X_0)|w|^{p-1}w-\frac{p+3}{p-1} \partial_s w- 2 y \partial_{ys} w\notag\\ &+& e^{-s} \frac{(d-1)}{X_0+y e^{-s}} \partial_y w+  e^{-\frac{2ps}{p-1}}f(e^{\frac{2s}{p-1}}w)+
 (\beta (X_0+y e^{-s})-\beta(X_0))|w|^{p-1}w\\ &+& e^{-\frac{2ps}{p-1}}  G(X_0+y e^{-s},T_0-e^{-s},e^{\frac{(p+1)s}{p-1}} \partial_y w,  e^{\frac{(p+1)s}{p-1}} ( \partial_{s} w+y\partial_{y} w+ \frac{2}{p-1} w )).\notag
 \end{eqnarray}

In the whole paper, we use the notation
 \begin{eqnarray}\label{F}
F(u)=\int_{0}^{u}f(v)dv.
 \end{eqnarray}

\subsection{A Lyapunov functional in similarity variables outside the origin}\label{2.1}
In this subsection we prove the existence of a Lyapunov functional and the novelty lays in the new coefficient $\beta(X)\not\equiv 1$.
We recall that for the case $(f,G)\equiv(0,0)$ with a constant $\beta$, the Lyapunov functional in one space dimension is
\begin{equation}\label{150}
 E_0(w,\partial_s w)=\int_{-1}^{1} \left( \frac{1}{2} |\partial_s w|^2+\frac{1}{2} |\partial_y w|^2 (1-y^2)+\frac{p+1}{(p-1)^2}|w|^2-\frac{\beta(X_0)}{p+1}|w|^{p+1}\right) \rho dy.
\end{equation}
In order to find a Lyapunov functional for our equation (\ref{eqwbeta}), we introduce 
\begin{equation}\label{defE}
 E(w,\partial_s w)= E_0(w,\partial_s w)+I(w(s),s)+ J(w(s),s)+ K(w(s),s),
\end{equation}
where
\begin{equation}\label{i}
I(w(s),s)=-e^{-\frac{2(p+1)s}{p-1}}\int_{-1}^{1} F(e^{\frac{2s}{p-1}}w) \rho dy,\end{equation}
\begin{equation}\label{j}
 J(w(s),s)=   -\frac{1}{p+1}\int_{-1}^{1} (\beta(X_0+y e^{-s})-\beta(X_0))|w|^{p+1} \rho dy
\end{equation}
\begin{equation}\label{k}
K(w(s),s)= -e^{-\gamma s}\int_{-1}^{1} w \partial_s w \rho dy, \end{equation}
with
\begin{equation}\label{gamma}
\gamma =\min \left(\frac{1}{2}, \frac{p-q}{p-1}\right)>0.
\end{equation}
Then, we claim the following:
\begin{prop}\label{lyapunov}{\bf (Energy estimates outside the origin)}\\
$(i)$ There exist $C=C(p,M)>0$ and $S_0\in \mathbb{R}$ such that for all $X_0>0$ and for all $s\ge \max (-\log T_U(X_0), S_0, -4 \log X_0, -\log \frac{X_0}{2})$,
\begin{equation*}
\frac{d}{ds} E(w(s),s )\le \frac{p+3}{2} e^{-\gamma s} E_0(w(s), s) - \frac{3}{p-1} \int_{-1}^{1} ( \partial_s w)^2\frac{\rho}{1-y^2} dy+ C e^{-2 \gamma s}.
\end{equation*}
$(ii)$ There exists $S_1(p,N,M,q)\in \mathbb{R}$ such that, for all $s\ge \max(s_0,S_1)$, we have $H(w(s),s)\ge 0$.
\end{prop}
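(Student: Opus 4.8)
The plan is to follow the differential--energy scheme of Antonini--Merle \cite{AM01} and Hamza--Zaag \cite{HZ}, the whole novelty being concentrated in the corrector $J$, which is tailored to absorb the variable coefficient $\beta$. For part $(i)$ I would first differentiate $E_0(w(s),s)$ along the flow, integrate by parts in $y$ the resulting term $\int_{-1}^1(1-y^2)\partial_y w\,\partial_{ys}w\,\rho\,dy$ so as to expose the divergence form $\frac{1}{\rho}\partial_y(\rho(1-y^2)\partial_y w)$, and then substitute equation \eqref{eqwbeta} for $\partial_s^2 w$. The first line of \eqref{eqwbeta} reproduces the unperturbed flux balance, which collapses, exactly as in the Antonini--Merle computation, to the clean dissipation $-\frac{4}{p-1}\int_{-1}^1(\partial_s w)^2\frac{\rho}{1-y^2}\,dy$; each remaining term of \eqref{eqwbeta} then contributes a cross term of the form $\int_{-1}^1 \partial_s w\cdot(\mathrm{perturbation})\,\rho\,dy$ that must be accounted for.

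The core of the argument is a cancellation bookkeeping for these cross terms. The functional $I$ is designed so that $\frac{dI}{ds}$ contains $-e^{-\frac{2ps}{p-1}}\int_{-1}^1 f(e^{\frac{2s}{p-1}}w)\,\partial_s w\,\rho\,dy$, the exact opposite of the cross term produced by the $f$--perturbation; what survives of $\frac{dI}{ds}$ is $O(e^{-\gamma s})$ once one invokes $|f(u)|\le M(1+|u|^q)$ with $q<p$ together with $\gamma\le\frac{p-q}{p-1}$. Likewise $J$ is built so that $\frac{dJ}{ds}$ cancels the cross term produced by $(\beta(X_0+ye^{-s})-\beta(X_0))|w|^{p-1}w$, leaving only the residual $\frac{1}{p+1}\int_{-1}^1\beta'(X_0+ye^{-s})\,y e^{-s}|w|^{p+1}\rho\,dy$ coming from the $s$--dependence of the coefficient. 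Finally the corrector $K$, after one further substitution of \eqref{eqwbeta} into $-e^{-\gamma s}\int_{-1}^1 w\,\partial_s^2 w\,\rho\,dy$ and an integration by parts, reconstructs an $e^{-\gamma s}$--weighted copy of the constituents of $E_0$; this reconstruction both yields the term $\frac{p+3}{2}e^{-\gamma s}E_0$ on the right--hand side and, through Young's inequality, absorbs the $G$--cross term (which after the rescaling carries the spare factor $e^{-\frac{2ps}{p-1}}e^{\frac{(p+1)s}{p-1}}=e^{-s}$) at the price of a fraction $\frac{1}{p-1}$ of the dissipation, thereby degrading the coefficient $-\frac{4}{p-1}$ to $-\frac{3}{p-1}$.

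The step I expect to be the main obstacle is the control of the two genuinely new $L^{p+1}_\rho$ contributions created by the non-constant $\beta$: the residual of $\frac{dJ}{ds}$ and the $|w|^{p+1}$ term surfacing in $\frac{dK}{ds}$. I would bound both with the Hardy--Sobolev inequality $\|w\|_{L^{p+1}_\rho}\le C\|w\|_{H^1_\rho}$ recalled after \eqref{H1}, and then reorganize them, using the very definition \eqref{150} of $E_0$, into the $e^{-\gamma s}E_0$ term. Here the lower bounds on $s$ are indispensable and explain the thresholds in the statement: $s\ge-\log\frac{X_0}{2}$ forces $X_0+ye^{-s}\ge\frac{X_0}{2}>0$ for $|y|<1$, so the arguments of $\beta$ and $\beta'$ stay in a compact set bounded away from the origin on which $b\in C^1(\mathbb{R}_+)$ keeps $\beta$ and $\beta'$ bounded, while $s\ge-4\log X_0$ guarantees $\frac{e^{-s}}{X_0}\le e^{-\gamma s}$, which is precisely what is needed to treat the first-order term $e^{-s}\frac{d-1}{X_0+ye^{-s}}\partial_y w$ as an $O(e^{-\gamma s})$ perturbation uniformly in $X_0>0$. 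The inhomogeneous constants in \eqref{condition} integrate against $\rho$ into the additive error $Ce^{-2\gamma s}$, and choosing $S_0$ large then closes part $(i)$.

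For part $(ii)$ I would follow the same scheme as in \cite{HZ} (itself adapted from \cite{AM01}): the nonnegativity of the auxiliary functional $H$ for $s$ large rests on combining the near-monotonicity furnished by $(i)$ with a coercivity argument that controls the nonlinear contribution $\frac{\beta(X_0)}{p+1}\|w\|_{L^{p+1}_\rho}^{p+1}$ through the same Hardy--Sobolev inequality. The only new input is again the variable coefficient $\beta$, whose boundedness on the relevant compact region is secured by the very thresholds $s\ge-\log\frac{X_0}{2}$ and $s\ge-4\log X_0$ used above; since every constant entering the argument is uniform in $X_0>0$, the resulting threshold $S_1$ depends only on $(p,N,M,q)$, as asserted.
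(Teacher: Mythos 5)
Your overall scheme---differentiating $E_0+I+J$ by multiplying \eqref{eqwbeta} by $\partial_s w\,\rho$ and integrating, letting $I$ and $J$ cancel the $f$-- and $\beta$--cross terms, and using the corrector $K$ to generate $\frac{p+3}{2}e^{-\gamma s}E_0$---is exactly the paper's strategy, and your reading of the thresholds $s\ge -\log\frac{X_0}{2}$ and $s\ge -4\log X_0$ is correct. However, there is a genuine gap at precisely the step you single out as the main obstacle: the control of the positive $\int_{-1}^{1}|w|^{p+1}\rho\,dy$ error terms (the residual of $\frac{dJ}{ds}$, the bound \eqref{k8} on $K_8$, and the analogous contributions of $I$, $K_4$, $K_6$). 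You propose to bound these via Hardy--Sobolev, $\|w\|_{L^{p+1}_\rho}\le C\|w\|_{H^1_\rho}$, and to ``reorganize'' them into the term $\frac{p+3}{2}e^{-\gamma s}E_0$. This cannot work: Hardy--Sobolev converts $\int|w|^{p+1}\rho\,dy$ into $\|w\|_{H^1_\rho}^{p+1}$, a $(p+1)$-homogeneous quantity that is not dominated by the quadratic part of $E_0$ (no a priori bound on $\|w\|_{H^1_\rho}$ is available at this stage---such a bound is Theorem~\ref{T0}, whose proof \emph{uses} the Lyapunov functional, so the argument would be circular); worse, $E_0$ contains $-\frac{\beta(X_0)}{p+1}\int|w|^{p+1}\rho\,dy$ with a \emph{negative} sign, so pushing positive $|w|^{p+1}$ errors into $\frac{p+3}{2}e^{-\gamma s}E_0$ makes the bookkeeping harder, not easier. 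What actually closes the argument in the paper is that the exact identity \eqref{j2} for $e^{\gamma s}\frac{d}{ds}K$ produces, besides $\frac{p+3}{2}(E_0+I+J)$, sign-definite leftovers: negative multiples of $\int w^2\rho\,dy$, of $\int(\partial_y w)^2(1-y^2)\rho\,dy$, and crucially the term $-\frac{p-1}{2(p+1)}\,\beta(X_0)\int|w|^{p+1}\rho\,dy$ (arising because $\frac{p+3}{2}\cdot\frac{1}{p+1}-1=-\frac{p-1}{2(p+1)}$). Since $\beta(X_0)>0$, this negative term, weighted by $e^{-\gamma s}$, absorbs all the positive $|w|^{p+1}$ errors, which are of order $e^{-2\gamma s}\int|w|^{p+1}\rho\,dy$ or smaller, once $s\ge S_0$. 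No Sobolev-type inequality enters this absorption; it is pure sign bookkeeping inside the virial identity, and it is the real reason $K$ is introduced.

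Your part $(ii)$ is also off target. The inequality $H(w(s),s)\ge 0$ is not a coercivity statement and cannot be proved by Hardy--Sobolev: for large $s$ one only has $H(w)\ge -\frac{2\beta(X_0)}{p+1}\int_{-1}^{1}|w|^{p+1}\rho\,dy$, so $H$ is genuinely negative on part of $H^1_\rho\times L^2_\rho$. The nonnegativity holds only \emph{along the flow}, and the paper obtains it from the blow-up criterion of Antonini--Merle, in the perturbative form of Hamza--Zaag: if $H(w(s_1),s_1)<0$ for some $s_1$, the monotonicity from $(i)$ and a convexity-type differential inequality force $w$ to blow up in finite $s$-time, contradicting the fact that $w_{X_0}$ is defined for all $s\ge -\log T_U(X_0)$. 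So the missing idea in both parts is the same: the estimates rest on sign-definite structure and a contradiction argument, not on Sobolev coercivity.
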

\noindent{\bf Remark:} From $(i)$, we see that $H$ given by
\begin{equation*}
 H(w(s),s )= E(w(s),s ) e^{\frac{p+3}{2\gamma} e^{-\gamma s}}+ \mu e^{-2\gamma s} (\mu>0)
\end{equation*}
is a Lyapunov functional for equation (\ref{eqwbeta}).

\begin{proof}[Proof of Proposition \ref{lyapunov}]$ $\\
In this proof we use the notation, $x_+=\max(0,x)$.\\
$(i)$ We proceed like Hamza and Zaag in \cite{HZ} (See page $1092$) and we deal with the new term coming from (\ref{eqwbeta}). For that reason, we give the equations, recall the estimates already proved in \cite{HZ} and focus only on the new term.\\

\noindent We multiply equation (\ref{eqwbeta}) by $\partial_s w \rho$ and integrate for $y\in (-1,1)$, using (\ref{i}) and  (\ref{j}), we have for $X=X_0+ye^{-s}$ :
\begin{eqnarray}\label{2.14}
&\,&\frac{d}{ds} (E_0(w(s),s)+I(w(s),s)+J(w(s),s))\notag \\
&=& \frac{-4}{p-1} \int_{-1}^{1} \frac{(\partial_s w)^2}{1-y^2} \rho dy+ \underbrace{ (N-1)e^{-s} \int_{-1}^{1}    \partial_s w \partial_y w \frac{\rho}{X} dy}_{ I_1(s)}   \notag \\
&+& \underbrace{\frac{2(p+1)}{p-1}  e^{-\frac{2(p+1)s}{p-1}} \int_{-1}^{1} F(e^{\frac{2s}{p-1}}w)  \rho dy}_{ I_2(s)}+ \underbrace{ \frac{2}{p-1}  e^{-\frac{2ps}{p-1}} \int_{-1}^{1} f(e^{\frac{2s}{p-1}}w)w  \rho dy}_{ I_3(s)} \\
 &+&\underbrace{\frac{e^{-s}}{p+1} \int_{-1}^{1} y\beta'(X_0+y e^{-s}) |w|^{p+1}  \rho dy}_{ I_4(s)}\notag  \\
&+& \underbrace{ e^{-\frac{2ps}{p-1}} \int_{-1}^{1} G(X_0+y e^{-s},T_0-e^{-s},    e^{\frac{(p+1)s}{p-1}} \partial_{y} w  , e^{\frac{(p+1)s}{p-1}} ( \partial_{s} w+y\partial_{y} w+ \frac{2 }{p-1} w ))   \partial_s w \rho dy}_{ I_5(s)} \notag 
\end{eqnarray}
The terms $I_1$, $I_2$ and $I_3$ can be controlled exactly as in page $1092$ in \cite{HZ}. For $I_5$,  comparing to the previous work, we see that $G$ involves new terms, but as it satisfies condition (\ref{Hg}), we can also adapt the study of Hamza and Zaag in \cite{HZ} to get :
\begin{eqnarray}\label{2.15}
|I_1(s)|& \le& C e^{-s}\int_{-1}^{1}( \partial_y w)^2 \rho (1-y^2) dy+ \frac{Ce^{-s}}{X_0^2}  \int_{-1}^{1} (\partial_s w)^2\frac{\rho}{1-y^2}  dy,\\
\label{2.16}
|I_2(s)|+|I_3(s)|& \le& C e^{-\frac{2(p-q)s}{p-1}}+C e^{-\frac{2(p-q)s}{p-1}}  \int_{-1}^{1}  | w|^{p+1}  \rho dy,\\
\label{2.17}
|I_5(s)|& \le& C e^{-s}\int_{-1}^{1}\left(( \partial_y w)^2 (1-| y|^2) + \frac{(\partial_s w)^2}{1-y^2}+ w^2\right) \rho  dy+Ce^{-s}.
\end{eqnarray}
For the new term $I_4$, we use the fact that $\beta$ is of class $C^1$, we get:
\begin{eqnarray}\label{2.18}
|I_4 (s)| \le     \frac{e^{-s}}{p+1}  || \beta'  ||_{L^\infty_{((X_0-T)_+, X_0+T)}}  \int_{-1}^{1}   |w|^{p+1}    \rho  dy.
\end{eqnarray}
Using (\ref{2.14}), (\ref{2.15}), (\ref{2.16}), (\ref{2.17}) and (\ref{2.18}), we have
\begin{eqnarray}\label{2.20}
&\,& \frac{d}{ds} (E_0(w(s),s)+I(w(s),s)+J(w(s),s)) \le  (\frac{-4}{p-1}+
 C e^{-\frac{s}{2}} ) \int_{-1}^{1} (\partial_s w)^2\frac{\rho}{1-y^2}  dy \notag
\\&+&C e^{-s} \int_{-1}^{1} \left( ( \partial_y w)^2  (1-|y|^2)+ w^2\right) \rho dy+ C e^{-2 \gamma  s} \int_{-1}^{1} |w|^{p+1}  \rho dy+C e^{-2\gamma s}.
\end{eqnarray}
Now, we consider $(K(w(s),s))$ (\ref{k}). Using equation (\ref{eqwbeta}) and integration by parts, we write:
\begin{eqnarray*}\label{}
&\,&e^{ \gamma  s}  \frac{d}{ds} (K(w(s),s)) =-\int_{-1}^{1} (\partial_s w)^2\rho dy+ \int_{-1}^{1} (\partial_y w)^2 (1-y^2)\rho dy+\frac{2p+2}{(p-1)^2} \int_{-1}^{1} w^2\rho dy\\
&-&\beta(X_0)\int_{-1}^{1} |w|^{p+1} \rho dy+(\gamma+\frac{p+3}{p-1} -2N )\int_{-1}^{1}w \partial_s w\rho dy-2\int_{-1}^{1}w \partial_s w y\rho' dy\\
&-& 2 \int_{-1}^{1}\partial_s w \partial_y w y\rho dy- e^{-\frac{2ps}{p-1}}\int_{-1}^{1} w f\left(e^{\frac{2s}{p-1}} w\right) \rho dy-(N-1)  e^{-s}\int_{-1}^{1}w \partial_y w \frac{\rho }{r}   dy\\
 &-&e^{-\frac{2ps}{p-1}}\int_{-1}^{1} w G(X_0+y e^{-s},T_0-e^{-s},  e^{\frac{(p+1)s}{p-1}} \partial_{y} w  , e^{\frac{(p+1)s}{p-1}} ( \partial_{s} w+y\partial_{y} w+ \frac{2 }{p-1} w )) \rho dy\\
 &-&\int_{-1}^{1}(\beta(X_0+y e^{-s})-\beta(X_0)) |w|^{p+1} \rho dy.
\end{eqnarray*}
Using (\ref{i}) and (\ref{j}), we get:
\begin{eqnarray}\label{j2}
&\,&e^{ \gamma  s}  \frac{d}{ds} (K(w(s),s)) =\frac{p+3}{2} (E_0(w(s))+I(w(s))+ J(w(s)) )- \frac{p-1}{4}  \int_{-1}^{1} (\partial_y w)^2 (1-y^2)\rho dy\notag\\
&-&\frac{p+1}{2(p-1)} \int_{-1}^{1} w^2\rho dy-\frac{p-1}{2(p+1)}\beta(X_0)\int_{-1}^{1} |w|^{p+1} \rho dy\notag\\
&+&\underbrace{(\gamma+\frac{p+3}{p-1} -2N +\frac{p+3}{2} e^{- \gamma  s}  )\int_{-1}^{1}w \partial_s w\rho dy}_{ K_1(s)}\notag \\
&+&\underbrace{\frac{8}{p-1}\int_{-1}^{1}w \partial_s w  \frac{y^2}{1-y^2}\rho dy}_{ K_2(s)}
\underbrace{-2\int_{-1}^{1}\partial_s w \partial_y w y\rho dy}_{ K_3(s)}
\underbrace{-e^{-\frac{2ps}{p-1}}\int_{-1}^{1}w  f(e^{\frac{2ps}{p-1}}w)\rho dy}_{ K_4(s)}\notag\\
 &-&\underbrace{e^{-\frac{2ps}{p-1}}\int_{-1}^{1} w G(X_0+y e^{-s},T_0-e^{-s},  e^{\frac{(p+1)s}{p-1}} \partial_{y} w  , e^{\frac{(p+1)s}{p-1}} ( \partial_{s} w+y\partial_{y} w+ \frac{2 }{p-1} w )) \rho dy}_{ K_5(s)}\notag\\
 &+&\underbrace{\frac{p+3}{2}e^{-\frac{2(p+1)s}{p-1}} \int_{-1}^{1} F(e^{\frac{2}{p-1}s}w)\rho dy}_{ K_6(s)}
 \underbrace{-(N-1)e^{-s}\int_{-1}^{1} w \partial_y w\frac{ \rho}{r} dy}_{ K_7(s)}\notag\\
 &-&\underbrace{\frac{p-1}{2(p+1)}\int_{-1}^{1}(\beta(X_0+y e^{-s})-\beta(X_0)) |w|^{p+1} \rho dy}_{ K_8(s)}
\end{eqnarray}

Note that all the terms $K_1$, $K_2$, $K_3$, $K_4$, $K_5$, $K_6$ and $K_7$ have been studied  in \cite{HZ} (for details see page $1094$ in \cite{HZ}). For the reader's convenience, we recall the following estimates:
\begin{eqnarray}\label{2}
|K_1(s)|& \le& C e^\frac{\gamma s}{2}\int_{-1}^{1}  (\partial_s w)^2\frac{\rho}{1-y^2}  dy+C   e^{-\frac{\gamma s}{2} }   \int_{-1}^{1} w^2\rho  dy ,\\
|K_2(s)|& \le&  C e^\frac{\gamma s}{2}\int_{-1}^{1}  (\partial_s w)^2\frac{\rho}{1-y^2}  dy+C   e^{-\frac{\gamma s}{2} }   \int_{-1}^{1} w^2\rho  dy\notag\\
&+&C   e^{-\frac{\gamma s}{2} }   \int_{-1}^{1} ( \partial_y w)^2 \rho (1-y^2) dy,\\
|K_3(s)|& \le&  C e^\frac{\gamma s}{2}\int_{-1}^{1}  (\partial_s w)^2\frac{\rho}{1-y^2}  dy+C   e^{-\frac{\gamma s}{2} }   \int_{-1}^{1} ( \partial_y w)^2 \rho (1-y^2) dy,\\
|K_4(s)|+|K_6(s)|& \le&  C e^{-\gamma s}+C e^{-\gamma s} \int_{-1}^{1}  | w|^{p+1}  \rho dy\\
|K_5(s)|& \le&  C e^{-\gamma s }\int_{-1}^{1}  (\partial_s w)^2\frac{\rho}{1-y^2}  dy+C e^{-\gamma s }  \int_{-1}^{1} ( \partial_y w)^2 \rho (1-y^2) dy\notag\\
&+&C e^{-\gamma s }  \int_{-1}^{1}  w^2\rho  dy+C e^{-\gamma s }\\   
|K_7(s)|& \le&  C e^{-\frac{s}{2} }\int_{-1}^{1} ( \partial_y w)^2 \rho (1-y^2) dy+ C e^{-\frac{s}{2} }\int_{-1}^{1} w^2\rho  dy.
\end{eqnarray}

For the new term $K_8$, using the fact that $\beta$ is of class $C^1$ we see that:
\begin{eqnarray}\label{k8}
|K_8 (s)| \le  \frac{p-1}{2(p+1)}         || \beta  ||_{L^\infty((X_0-T)_+, X_0+T)}   e^{-s}   \int_{-1}^{1}   | w|^{p+1}    \rho  dy.
\end{eqnarray}
By the same way, using the fact that $\beta$ is of class $C^1$, we prove that $J$ (\ref{j}) satisfies:
\begin{eqnarray}\label{k88}
|J(w(s))| \le Ce^{-s}    \int_{-1}^{1}   | w|^{p+1}    \rho  dy. 
\end{eqnarray} 
Using the definitions of $I$ (\ref{i}), $F$(\ref{F}) and the condition (\ref{condition}) we see that:
\begin{align}\label{k89}
|I(w(s))|& =\big | C e^{-2 \frac{p+1}{p-1}s} \int_{-1}^1  \int_{0}^{ \frac{2s}{p-1}w} f(v) dv  \rho  dy\big | \notag\\
&\le Ce^{ \frac{-2ps}{p-1}}   \int_{-1}^{1}    |w |    \rho  dy +Ce^{ \frac{-2(p-q)s}{p-1}} \int_{-1}^{1} | w|^{q+1}\rho  dy\\
&\le Ce^{ \frac{-2(p-q)s}{p-1}}   +Ce^{ \frac{-2ps}{p-1}}   \int_{-1}^{1}   w^2    \rho  dy +Ce^{ \frac{-2(p-q)s}{p-1}}   \int_{-1}^{1}   | w|^{p+1}    \rho  dy.  \notag
\end{align}

Using (\ref{j2})-(\ref{k89}) and the definition of $\gamma $ \eqref{gamma} we deduce that
\begin{eqnarray}\label{2.29}
&\,&e^{ \gamma  s}  \frac{d}{ds} (K(w(s),s)) \le \frac{p+3}{2} E_0(w(s)) \notag\\
&+&\left(C e^{-\frac{\gamma s}{2} }-\frac{p-1}{4} \right) \int_{-1}^{1} (\partial_y w)^2 (1-y^2)\rho dy
+\left(C e^{-\frac{\gamma s}{2}} - \frac{p+1}{2(p-1)}\right)  \int_{-1}^{1} w^2\rho dy\\
&+&\left(C e^{-\frac{\gamma s}{2}} - \frac{p+1}{2(p-1)}\beta(X_0)\right) \int_{-1}^{1} |w|^{p+1} \rho dy
+C e^\frac{\gamma s}{2} \int_{-1}^{1}(\partial_s w)^2  \frac{\rho }{1-y^2}dy+C e^{-\gamma s}.\notag
\end{eqnarray}

Using the definition \eqref{defE} of $E$, (\ref{2.20}), (\ref{2.29})  we get
(remember from \eqref{gamma} that $\gamma\le \frac 12$)
\begin{eqnarray*}\label{}
\frac{d}{ds} (E(w(s),s))& \le& C e^{-2\gamma s}+\frac{p+3}{2}e^{ -\gamma  s}  E_0(w(s),s)
+\left(C e^{-\frac{\gamma s}{2} }-\frac{4}{p-1} \right)  \int_{-1}^{1}(\partial_s w)^2  \frac{\rho }{1-y^2}dy\notag\\
&+&\left(C e^{-\frac{\gamma s}{2}} - \frac{p+1}{2(p-1)}\right)e^{ -\gamma  s}   \int_{-1}^{1} w^2\rho dy\\
&+&\left(C e^{-\frac{\gamma s}{2} }-\frac{p-1}{4} \right)e^{ -\gamma  s} \int_{-1}^{1} (\partial_y w)^2 (1- |y|^2)\rho dy\notag\\
&+&\left(C e^{-\frac{\gamma s}{2}} - \frac{p+1}{2(p-1)} \beta(X_0)\right) e^{ -\gamma  s}\int_{-1}^{1} |w|^{p+1} \rho dy.\notag\\
\end{eqnarray*}
Then, for $S_0$ well chosen large enough so that $s\ge \max (-\log T(X_0), S_0, -4 \log X_0, -\log \frac{X_0}{2})$, we write
\begin{equation*}
\frac{d}{ds} E(w(s),s )\le \frac{p+3}{2} e^{-\gamma s} E_0(w(s), s) - \frac{3}{p-1} \int_{-1}^{1} ( \partial_s w)^2\frac{\rho}{1-y^2} dy+ C e^{-2 \gamma s}.
\end{equation*}
This yields item $(i)$ of Proposition \ref{lyapunov}.\\
$(ii)$ 
This follows from the blow-up criterion proved by Antonini and Merle in \cite{AM01}. In fact, we need to follow the perturbative argument of Hamza and Zaag \cite{HZ}. As in \cite{HZ}, it is easy to prove the following identity for large $s$ and for any $w\in \mathcal{H}$:
$$H(w)\ge -\frac{2\beta (X_0)}{p+1}\int_{-1}^{1}  |w|^{p+1}\rho dy.$$ 
For more details see $ii)$ page $1096$ in \cite{HZ} and see page $1147$ in  \cite{AM01}. This concludes the proof of Proposition \ref{lyapunov}.
\end{proof}
\subsection{Blow-up results outside the origin}\label{2.2}

In this subsection, we give the main ideas of the proofs of our blow-up results outside the origin (Theorem \ref{T0}, Theorem \ref{T00}, Corollary  \ref{cor3} and Proposition \ref{prop0}). However, we will not give the details.
  In fact, thanks to the transformation (\ref{6.5}), we obtain the equation (\ref{eq6.5}) which is almost the same equation already studied by Hamza and Zaag in \cite{HZ}. In addition, in Subsection \ref{2.1}, we see that a Lyapunov functional is available (see the remark following Proposition \ref{lyapunov}), so with this informations, the reader can easily see that  the strategy adapted in \cite{HZ} from the strategy developed by Merle and Zaag in \cite{MZ05}, \cite{MR2147056}, \cite{MR2362418}, \cite{MR2415473}, \cite{MR2931219} and \cite{Mz12} together with C\^{o}te and Zaag \cite{CZ12} holds with very minor adaptations (See also \cite{MR2799813}). For that reason,  we will sketch the main steps in the following and explicit only the delicate estimates :\\
 
- In Step 1, we show that the solution is bounded in self-similar variables in the energy norm. In particular, we will prove Theorem \ref{T0}.\\

- In Step 2, we find the asymptotic behavior and derive the regularity of the blow-up curve. In particular, we will prove Theorem \ref{T00}, Corollary  \ref{cor3} and Proposition \ref{prop0}.\\

\medskip

\noindent{\bf Step 1 : Boundedness of the solution in similarity variables}

\noindent We derive with no difficulty the following:
\begin{prop}\label{prp}
For all $X_0>0$, there is a $C_2 (X_0)>0$ and $S_2 (X_0)\in \mathbb{R}$ such that for all $X\in [\frac{X_0}{2}, \frac{3X_0}{2}]$ and $s\ge S_2(X_0)$,
\begin{equation*}\label{}
\int_{-1}^{1} \left( \partial_y w^2(1-y^2) +w^2+ \partial_s w^2 +\beta (X_0+ye^{-s})|w|^{p+1}\right) \rho dy\le C_2 (X_0).
\end{equation*}
\end{prop}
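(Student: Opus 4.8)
The plan is to use the Lyapunov functional $H$ constructed in Proposition \ref{lyapunov} as the sole engine of the proof; once such a functional is available, the boundedness follows from the now-classical scheme of Antonini and Merle \cite{AM01}, refined by Merle and Zaag and adapted by Hamza and Zaag \cite{HZ}. First I would fix a center $X \in [\frac{X_0}{2}, \frac{3X_0}{2}]$ and work with $w = w_X$ solving (\ref{eqwbeta}) with $X$ in place of $X_0$. Since $H(w(s),s) = E(w(s),s)\, e^{\frac{p+3}{2\gamma}e^{-\gamma s}} + \mu e^{-2\gamma s}$ is nonincreasing by item $(i)$, we get $H(w(s),s) \le H(w(s_0),s_0) =: C_0(X)$ for all $s \ge s_0$, while item $(ii)$ gives $H(w(s),s) \ge 0$ for $s$ large. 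Hence $0 \le H(w(s),s) \le C_0(X)$, and integrating the differential inequality of item $(i)$ over $[s_0,\infty)$ yields the space--time dissipation bound $\int_{s_0}^{\infty}\int_{-1}^{1} (\partial_s w)^2 \frac{\rho}{1-y^2}\, dy\, ds < +\infty$. In particular $E(w(s),s)$, and therefore $E_0(w(s),s)$ (since $I,J,K$ are of lower order for large $s$ by (\ref{k8})--(\ref{k89})), stays bounded on $[s_0,\infty)$.

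The main obstacle is that $E_0$ is not coercive: its expression (\ref{150}) contains the negative term $-\frac{\beta(X_0)}{p+1}\int_{-1}^{1}|w|^{p+1}\rho\, dy$, so an upper bound on $E_0$ does not by itself control the $H^1_\rho$ norm. To overcome this I would control the nonlinear term in time average. Multiplying (\ref{eqwbeta}) by $w\rho$ and integrating yields an identity for $\frac{d}{ds}\int_{-1}^{1} w\,\partial_s w\, \rho\, dy$ in which $\beta(X)\int_{-1}^{1}|w|^{p+1}\rho\, dy$ occurs with a favorable sign; integrating this identity over $[s,s+1]$ and estimating the boundary terms $\int_{-1}^{1} w\,\partial_s w\,\rho\, dy$ by Cauchy--Schwarz together with the bound on $E_0$ and the dissipation gives $\int_s^{s+1}\int_{-1}^{1}|w|^{p+1}\rho\, dy\, ds' \le C(X)$. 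Feeding this back into the bounded expression of $E_0$ produces a time-averaged bound on $\int_{-1}^{1}\big((\partial_s w)^2 + (\partial_y w)^2(1-y^2) + w^2\big)\rho\, dy$, and the $|w|^{p+1}$ term in the statement is then handled by the Hardy--Sobolev inequality $\|w\|_{L^{p+1}_\rho}\le C\|w\|_{H^1_\rho}$ recalled after (\ref{15}). The terms genuinely new to $\beta \not\equiv 1$ are precisely $I_4$, $K_8$ and $J$, already shown to be of lower order in (\ref{2.18}), (\ref{k8}) and (\ref{k88}), so they do not disturb the argument.

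Finally I would upgrade this time-averaged bound to the pointwise-in-$s$ estimate claimed in the Proposition, exactly as in \cite{AM01,HZ}: using the integrated form of the energy inequality of item $(i)$ and the fact that the dissipation over each unit interval $[s,s+1]$ tends to $0$, one controls the oscillation of $E_0$ on unit intervals and concludes that each term of the integrand is bounded uniformly in $s$. Uniformity over $X \in [\frac{X_0}{2}, \frac{3X_0}{2}]$ follows because the constant in Proposition \ref{lyapunov}$(i)$ is independent of the center (it is $C(p,M)$), while the threshold there, $\max(-\log T_U(X), S_0, -4\log X, -\log\frac{X}{2})$, is bounded above uniformly for such $X$ (with $T_U$ continuous and positive on the compact interval), and $C_0(X)$ is likewise bounded by continuity of the solution in the region; taking the supremum over this compact range of centers yields $S_2(X_0)$ and $C_2(X_0)$. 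The genuinely delicate point throughout is the lack of coercivity of the energy, which is resolved only through the non-negativity of $H$ (the blow-up criterion of item $(ii)$); everything else is a routine perturbation of the constant-speed, $\beta \equiv 1$ analysis of \cite{HZ}, which is why the bound is derived here with no real difficulty.
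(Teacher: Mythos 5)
Your skeleton --- monotone $H$ from Proposition \ref{lyapunov}, $H\ge 0$ from the blow-up criterion, the integrated dissipation bound $\int_{s_0}^{\infty}\int_{-1}^{1}(\partial_s w)^2\frac{\rho}{1-y^2}\,dy\,ds<+\infty$, a multiplier identity (equation times $w\rho$) giving time-averaged control of $\int_{-1}^{1}|w|^{p+1}\rho\,dy$, then an upgrade to pointwise bounds, with the $\beta$-terms $I_4$, $K_8$, $J$ treated as harmless perturbations --- is exactly the route of the paper, whose proof of Proposition \ref{prp} consists of citing page 1091 of \cite{HZ}, itself an adaptation of \cite{MZ05} and of Proposition 3.5 in \cite{MR2362418}. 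However, in the two places where you spell out a mechanism of your own rather than defer to those references, the argument as written would fail, and both failures are instances of the non-coercivity of $E_0$ that you yourself single out as the main obstacle.

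First, the endpoint terms $\int_{-1}^{1} w\,\partial_s w\,\rho\,dy$ at times $s$ and $s+1$ cannot be handled ``by Cauchy--Schwarz together with the bound on $E_0$ and the dissipation'': an upper bound on $E_0$ controls neither $\int_{-1}^{1} w^2\rho\,dy$ nor $\int_{-1}^{1}(\partial_s w)^2\rho\,dy$ at a \emph{fixed} time (that is precisely the non-coercivity), and the dissipation is only a space-time bound, so it yields fixed-time information only at well-chosen times. The cited proofs select such good endpoint times by the mean value theorem and use Jensen's inequality $\int_{-1}^{1} w^2\rho\,dy\le C\left(\int_{-1}^{1}|w|^{p+1}\rho\,dy\right)^{2/(p+1)}$ so that every occurrence of the unknown quantity enters the resulting self-bounding inequality sublinearly, hence can be absorbed. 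Second, the final upgrade does not follow from ``controlling the oscillation of $E_0$ on unit intervals'': $E_0$ is a difference of positive and negative terms, so no bound on $E_0$ itself, however sharp, controls those terms separately; moreover the plain Hardy--Sobolev embedding $\|w\|_{L^{p+1}_\rho}\le C\|w\|_{H^1_\rho}$ that you invoke bounds the bad term \emph{superlinearly} in the good ones, so it cannot be absorbed into $E_0\le C$. What the references actually do is (a) obtain a pointwise-in-$s$ bound on $\int_{-1}^{1} w^2\rho\,dy$ from $\frac{d}{ds}\int_{-1}^{1} w^2\rho\,dy=2\int_{-1}^{1} w\,\partial_s w\,\rho\,dy$, Cauchy--Schwarz and the integrated dissipation, and then (b) either use the interpolation form of Hardy--Sobolev --- bounding $\int_{-1}^{1}|w|^{p+1}\rho\,dy$ by a power strictly less than $1$ of the energy norm once $\|w\|_{L^2_\rho}$ is bounded, which is where the subconformality of $p$ enters --- or run the space-time covering argument of \cite{MR2147056}, which is the path of Proposition 3.5 in \cite{MR2362418}. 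Your uniformity-in-$X$ discussion at the end is fine; it is these two analytic steps that need to be repaired (or honestly delegated to the references, as the paper itself does).
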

\begin{proof}[Proof of Proposition \ref{prp}]

The adaptation by Hamza and Zaag in page 1091 in \cite{HZ} to the perturbed case works in our case ($\beta (X_0+ye^{-s})\not\equiv 1$) with no difficulty. As in \cite{HZ},  the adaptation is straightforward from \cite{MZ05} and Proposition $3.5$ page $66$ in \cite{MR2362418}.
\end{proof}

\begin{proof}[Proof of Theorem \ref{T0}]

$i)$ Consider $x_0>0$ and $X_0= \phi(x_0)>0$. Let us start with the upper bound. Since $b$ is continuous, $\beta$ is continuous too and as $\beta (X_0+ye^{-s})>0$ for $y\in(-1,1)$ and $s$ large enough., we derive from Proposition \ref{prp} that
\begin{equation*}\label{}
\int_{-\frac{1}{2}}^{\frac{1}{2}} \left( \partial_y w^2 +w^2+ \partial_s w^2 +|w|^{p+1}\right) dy\le C_3 (X_0).
\end{equation*}
Using the covering method of Proposition 3.4 in \cite{MR2147056}, we recover the desired upper bound. As for the lower bound, it follows exactly as in the unperturbed case in Lemma 3.1 in \cite{MR2147056}, simply because equation (\ref{eq6.5}) is well posed in $H^1\times L^2$, from \cite{GT} as we explained in the introduction.

$ii)$ It is a direct consequence of Proposition \ref{prp}.
\end{proof}
\medskip

\noindent{\bf Step 2 : Dynamics of the solution and properties of the blow-up curve}

\noindent We recall form the definition of $T$ (\ref{T}) that
 \begin{align}\label{T-phi}T(x)=T_U(\phi(x))=T_U(X),\mbox{ and } T_U'(X)=\sqrt{a(x)} T'(x).
\end{align}

\begin{proof}[Proof of Theorem \ref{T00}]$ $\\
 i) {\bf Non-characteristic case :}  As we said before, our equation (\ref{eq6.5}) is the same as in \cite{HZ}, except for the coefficient $\beta(X)\not \equiv 1$. Thanks to the Proposition \ref{prp} above, the adaptation of Hamza and Zaag of the analysis of \cite{MR2362418} and \cite{MR2415473} works ; in particular, this is the case for Theorem $1$ page $1097$ which gives the following profile of $w$

 \begin{align}\label{profil-w}
w(y,s ) 
  &\sim\theta(x_0) \kappa_0 \frac{ (1-T_U'(X_0)^2)^{\frac{1}{p-1}}}{ (1+T_U'(X_0)y)^\frac{2}{p-1} }\mbox{ as } s \rightarrow +\infty,
\end{align}
where $X_0=\phi(x_0)$, in $H^1(-1,1)$. Using (\ref{T-phi}), we get our statement.\\
 ii) {\bf Characteristic case :}  The approach of C\^ote and Zaag in \cite{CZ12}, adapted to the perturbed case by Hamza and Zaag in \cite{HZ} stays valid in our case (for more details see page 1105 in \cite{HZ}).
\end{proof}
\begin{proof}[Proof of Corollary  \ref{cor3}] Applying the transformation (\ref{nchg}) to the profile given in (\ref{profil-w}) and using the sobolev embedding, we see that for $X_0\in \mathcal{R}\cap \mathbb{R}_+^*$, we have

 \begin{eqnarray*}
U(X,t)   \sim   \frac{\theta(x_0) \kappa_0 (1-T_U'(X_0)^2)^{\frac{1}{p-1}}}{ (T_U(X_0)-t+T'_U(X_0)(X-X_0))^\frac{2}{p-1} }\mbox{ as } t \rightarrow T_U(X_0)
\end{eqnarray*}
 uniformly for $ X$ such that $ |X-X_0 |<  T_U(X_0)-t$. Applying (\ref{T-phi}), we get the result.
\end{proof}
\begin{proof}[Proof of Proposition \ref{prop0}]$ $

$i)$ We can easily see that the strategy developed in the non-perturbed case in \cite{MR2415473}, and then adapted to the perturbed case in \cite{HZ} works in the present case ($\beta(X)\not \equiv1$), with minor adaptations.

$ ii)$ 
Let $x_0 \in  \mathcal{S} \backslash \{0\}$ with $k(x_0)$ solitons and $\xi_0(x_0) \in \mathbb{R}$ as center of mass of the solitons as shown in (\ref{4.3}) and (\ref{4.6}). Proceeding as in the adaptation by Hamza and Zaag in Theorem 5 in \cite{HZ} to the perturbed case (see also Theorem 1 and 2 in \cite{Mz12}, where this statement was proved with no perturbation),
 so we get

 \begin{align*}\label{}
T_U'(X)+\theta_U (X) &\sim \frac{\theta_U (X) \nu e^{-2 \theta_U (X) \xi_0 (X_0) } 
}{  |\log |X-X_0||^\frac{(k_U(X_0)-1)(p-1)}{2}},
 \\ \label{}
T_U(X)-T_U(X_0)+ |X-X_0| &\sim\frac{\nu e^{-2  \theta_U (X)  \xi_0 (X_0) } |X-X_0| }{  |\log |X-X_0||^\frac{(k_U(X_0)-1)(p-1)}{2}},
\end{align*}
 as $X\rightarrow X_0$, where $ \theta_U (X)=\frac{X-X_0}{ |X-X_0| }$ and $\nu=\nu (p)>0$. Using the correspondance between $x$ and $X$ and also $T_U$ and $T$ shown in (\ref{6.5}) and (\ref{T}), we recover our conclusion.
\end{proof}

\medskip

\subsection{Blow-up results at the origin}\label{2.3}

\begin{proof}[Proof of Theorem \ref{thorigine}]
The proof is done in the framework of similarity variables (\ref{nchg}), with $x_0=0$. Since $d$ is an integer, one
clearly sees that the equation satisfied by $w_0$ in (\ref{equa}) is simply the radial version of the multi-dimensional
equation considered in $\mathbb{R}^d$. Since $p$ is subconformal in relation to $d$, as shown in (\ref{3.5}), we are in the setting
considered by Hamza and Zaag in \cite{HZ} for perturbed equations, with the exceptions that we have a non-constant
coefficient in front of the nonlinear term here. As we have already seen while investigating the Lyapunov functional
in Subsection \ref{2.1}, that is not an issue, and one can adapt the proof of \cite{HZ} to the present equation, with no difficulties.
\end{proof}

\appendix 
\section{$L^2_{loc,u}$ for radial functions  }
\label{appendixA}
Note that we handle only $L^2$-type spaces, since the extension to $H^1$-type spaces is natural.
Consider $u$ a radial solution in  $L_{loc,u}^2$ in $\mathbb{R}^d$ and introduce $\tilde u$ such that
$u(x)=\tilde u(r)$ with $r=|x|,\; \forall x\in \mathbb{R}^d$.

Let $A=\displaystyle\sup\limits_{x_0\in \mathbb{R}^d}
\int_{B(x_0,1)}|u(x)|^2 dx $ the square of the $L_{loc,u}^2$ norm in $\mathbb{R}^d$ and\\ $B=\displaystyle\sup\limits_{r_0\ge  1}\frac{1}{r_0^{d-1}}
\int_{r_0-1}^{r_0+1}|\tilde u(r)|^2 r^{d-1}dr .$
 \noindent We also define for the crown $\mathcal{C}(r_0,1)$ by$$\forall r_0\ge 1,\;\mathcal{C}(r_0,1)=\{ x\in \mathbb{R}^d, |\;\; r_0-1\le |x| < r_0+1\}.$$
 
We aim at proving that the square root of $B$ is an equivalent norm to the $L_{loc,u}^2$ in the radial setting. More precisely, we have the following: 
\begin{lem}$ $\\
$i)$ $\exists \bar \alpha (d)>0$ such that $A\le \bar \alpha (d) B$.\\
$ii)$ $\exists \bar \beta (d)>0$ such that $B\le \bar \beta (d) A$.
\end{lem}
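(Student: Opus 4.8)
The statement asserts the equivalence of two norms on radial $L^2_{loc,u}$ functions: the genuine $d$-dimensional uniform-local norm $A$ (supremum over unit balls centered anywhere in $\mathbb{R}^d$) and the one-dimensional radial version $B$ (supremum over annuli $\mathcal{C}(r_0,1)$, normalized by $r_0^{d-1}$). The plan is to prove the two inequalities separately, using the coarea/polar-coordinates formula $\int_{\mathbb{R}^d} |u(x)|^2\,dx = \omega_{d-1}\int_0^\infty |\tilde u(r)|^2 r^{d-1}\,dr$, where $\omega_{d-1}$ is the surface area of the unit sphere, together with elementary geometric comparisons between balls $B(x_0,1)$ and annuli $\mathcal{C}(r_0,1)$.

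\begin{proof}[Proof of the Lemma]
Write $|x_0|=\rho_0$ for the center of a unit ball. Passing to polar coordinates, for any radial $u$ we have $\int_{B(x_0,1)}|u|^2\,dx=\int_{r\ge 0}|\tilde u(r)|^2\,\sigma(\rho_0,r)\,dr$, where $\sigma(\rho_0,r)$ is the $(d-1)$-dimensional measure of the intersection of the sphere of radius $r$ with the ball $B(x_0,1)$; this measure vanishes unless $|\rho_0-r|<1$.

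\textbf{Proof of $i)$: $A\le\bar\alpha(d)\,B$.}
First I would treat the generic case $\rho_0\ge 1$. Since $B(x_0,1)$ is contained in the annulus $\{\,\rho_0-1\le |x|\le \rho_0+1\,\}$, we have $\int_{B(x_0,1)}|u|^2\,dx\le\omega_{d-1}\int_{\rho_0-1}^{\rho_0+1}|\tilde u(r)|^2 r^{d-1}\,dr\le\omega_{d-1}\,\rho_0^{d-1}\,B$ by the very definition of $B$ (choosing $r_0=\rho_0$). But the ball $B(x_0,1)$ also lies at distance $\ge\rho_0-1$ from the origin, so the factor $\rho_0^{d-1}$ is controlled: when $\rho_0\ge 2$ we have $\rho_0^{d-1}\le 2^{d-1}(\rho_0-1)^{d-1}$, and on $B(x_0,1)$ the weight $r^{d-1}$ is comparable to $\rho_0^{d-1}$, which is what lets the normalization in $B$ absorb the geometric factor. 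Next I would handle the boundary case $\rho_0<2$ separately: here $B(x_0,1)\subset B(0,3)$, a fixed ball, which can be covered by finitely many annuli $\mathcal{C}(r_0,1)$ with $r_0\in\{1,2,3\}$ (say), each contributing at most $\omega_{d-1} r_0^{d-1}B\le C(d)B$; summing the finitely many contributions gives the bound. Combining both cases yields a constant $\bar\alpha(d)$ depending only on $d$.

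\textbf{Proof of $ii)$: $B\le\bar\beta(d)\,A$.}
Fix $r_0\ge 1$. The quantity to estimate is $\tfrac{1}{r_0^{d-1}}\int_{r_0-1}^{r_0+1}|\tilde u(r)|^2 r^{d-1}\,dr=\tfrac{1}{\omega_{d-1}r_0^{d-1}}\int_{\mathcal{C}(r_0,1)}|u(x)|^2\,dx$. The idea is to cover the annulus $\mathcal{C}(r_0,1)$ by unit balls $B(x_j,1)$ whose number $N(r_0)$ grows at the rate of the annulus volume, namely $N(r_0)\le C(d)\,r_0^{d-1}$ (this is the standard fact that an annulus of thickness $2$ and radius $r_0$ in $\mathbb{R}^d$ admits a covering by $O(r_0^{d-1})$ unit balls, since its volume is $O(r_0^{d-1})$ and the balls can be taken with bounded overlap). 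Then $\int_{\mathcal{C}(r_0,1)}|u|^2\,dx\le\sum_{j=1}^{N(r_0)}\int_{B(x_j,1)}|u|^2\,dx\le N(r_0)\,A\le C(d)\,r_0^{d-1}A$, and dividing by $\omega_{d-1}r_0^{d-1}$ gives $\tfrac{1}{r_0^{d-1}}\int_{r_0-1}^{r_0+1}|\tilde u|^2 r^{d-1}\,dr\le\bar\beta(d)\,A$. Taking the supremum over $r_0\ge 1$ completes the proof.
\end{proof}

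\textbf{Expected main obstacle.} The routine part is $i)$, where the single annulus directly dominates the single ball; the only care needed is the behavior near the origin, handled by the fixed finite cover. The genuine technical point is the covering estimate $N(r_0)\le C(d)\,r_0^{d-1}$ in $ii)$, i.e.\ producing a covering of the annulus by unit balls with cardinality of the optimal order $r_0^{d-1}$ and uniformly bounded overlap. One must verify that the constant $C(d)$ is independent of $r_0$, which is precisely the scale-invariant statement that a spherical shell of bounded thickness has covering number proportional to its surface area; a clean way to see this is to split the shell into $O(r_0^{d-1})$ angular sectors each of diameter $O(1)$, each contained in a single unit ball. Making this packing argument fully rigorous (rather than volumetric) is the step I expect to demand the most attention.
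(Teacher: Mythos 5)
Your part $ii)$ is correct and is essentially the paper's own argument: cover the crown $\mathcal{C}(r_0,1)$ by $O(r_0^{d-1})$ unit balls, bound the contribution of each by $A$, and divide by $\omega_{d-1}r_0^{d-1}$. The genuine problem is part $i)$. Your displayed chain establishes only
\begin{equation*}
\int_{B(x_0,1)}|u(x)|^2\,dx\;\le\;\omega_{d-1}\int_{\rho_0-1}^{\rho_0+1}|\tilde u(r)|^2 r^{d-1}\,dr\;\le\;\omega_{d-1}\,\rho_0^{d-1}\,B,
\end{equation*}
and the parasitic factor $\rho_0^{d-1}$ cannot be removed afterwards, because the loss occurs at the very first step, where you enlarge the ball to the full annulus. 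That step is lossy by exactly the factor you need: if $\tilde u$ is supported in $(\rho_0-\tfrac12,\rho_0+\tfrac12)$, the annulus integral $\omega_{d-1}\int|\tilde u|^2r^{d-1}dr$ is of order $\rho_0^{d-1}$ times the integral of $|u|^2$ over any single unit ball meeting the support, since such a ball sees only an angular fraction $\sim\rho_0^{-(d-1)}$ of the crown. The remarks you offer to ``control'' the factor are true but inoperative: the inequality $\rho_0^{d-1}\le 2^{d-1}(\rho_0-1)^{d-1}$ compares $\rho_0$ with $\rho_0-1$, not with a constant, and the comparability of the weight $r^{d-1}$ with $\rho_0^{d-1}$ would only help while the integration is still restricted to the ball --- which, after your first inequality, it no longer is.

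What is missing is a mechanism turning ``the ball occupies only a $\sim\rho_0^{-(d-1)}$ fraction of the crown'' into an estimate, and this requires exploiting radiality beyond merely rewriting the annulus integral in polar coordinates. The paper does it by packing: the crown $\mathcal{C}(\rho_0,1)$ contains $\alpha(d,\rho_0)\sim\alpha_0(d)\rho_0^{d-1}$ pairwise disjoint rotated copies $B(x_i,1)$ of $B(x_0,1)$ with $|x_i|=|x_0|$, and since $u$ is radial every copy carries the same integral, so that
\begin{equation*}
\alpha(d,\rho_0)\int_{B(x_0,1)}|u|^2\,dx\;=\;\int_{\bigcup_i B(x_i,1)}|u|^2\,dx\;\le\;\int_{\mathcal{C}(\rho_0,1)}|u|^2\,dx\;\le\;\omega_{d-1}\rho_0^{d-1}B,
\end{equation*}
and dividing by $\alpha(d,\rho_0)$ kills the factor. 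Alternatively, you could make rigorous the cap-measure bound you set up in your preamble but never used: show $\sigma(\rho_0,r)\le C(d)$ uniformly in $\rho_0\ge 2$ and $|r-\rho_0|<1$ (the intersection of a sphere of radius $r\ge 1$ with a unit ball is a cap of chord diameter at most $2$), whence $\int_{B(x_0,1)}|u|^2\,dx=\int|\tilde u(r)|^2\sigma(\rho_0,r)\,dr\le C(d)\int_{\rho_0-1}^{\rho_0+1}|\tilde u(r)|^2\,dr\le C(d)\,2^{d-1}B$, using $r\ge\rho_0-1\ge\rho_0/2$ to reinstate the weight $r^{d-1}/\rho_0^{d-1}$. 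Either route closes the gap; as written, part $i)$ is not proved.
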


\begin{proof}$ $\\
$i)$ It is enough to show that for any $x_0\in \mathbb{R}^d$, 
$$ \int_{B(0,2)}|u(x)|^2 dx \le    \bar \alpha (d) B,\mbox{ for some } \bar  \alpha (d) >0  . $$
Consider $x_0\in \mathbb{R}^d$. If $|x_0|<1$ and $x\in B(x_0,1)$ then $|x|<|x_0|+1<2$. Consequently,
\begin{eqnarray*}
\int_{B(x_0,1)}|u(x)|^2 dx\le \int_{B(0,2)}|u(x)|^2 dx= \omega_{d-1}\int_{0}^2 |\tilde u(r)|^2 r^{d-1}dr \le \omega_{d-1}B,
\end{eqnarray*}
where $\omega_{d-1}$ is the volume of the sphere $S^{d-1}$.\\
\medskip

 \noindent Now, if $|x_0|\ge1$, then we have $ B(x_0,1)     \subset \mathcal{C}(|x_0|,1)   $. Furthermore, for geometric considerations, we know that there exists $\alpha (d, |x_0|)>0$ such that the crown $\mathcal{C}(|x_0|,1)$ contains $\alpha (d,|x_0|) r_0^{d-1}>0$ disjoint copies of $B(x_0,1)$, with 
 
\begin{eqnarray}\label{ball3}
\alpha (d, |x_0|)\equiv \alpha_0 (d) r_0^{d-1}\mbox{ as } r_0\rightarrow +\infty \mbox{ for some } \alpha_0 (d)>0.\end{eqnarray}
If we denote by $x_i $ for $i\in \{0,...,\alpha-1\}$ the centers of those balls, then we have
 
\begin{eqnarray}\label{ball2}
\int_{\bigcup\limits_{i=0}^{\alpha-1} B(x_i,1) }|u(x)|^2 dx\le \int_{\mathcal{C}(|x_0|,1)}u(r)^2 r^{d-1}dr= \omega_{d-1}\int_{r_0-1}^{r_0+1} |\tilde u(r)|^2 r^{d-1}dx \le \omega_{d-1} Br_0^{d-1},
\end{eqnarray}
\medskip

\noindent on the one hand. On the other hand, since the difference between the two crown's radius is $2$  and the balls are of radius $1$, it follows that
\begin{eqnarray}\label{ball}
|x_i|=|x_0| ,\; \forall i\in \{0,...\alpha-1\}
\end{eqnarray}
Since $u$ is radial and the balls $B(x_i,1) $ are disjoint, using (\ref{ball}) we see that
$$\int_{\bigcup\limits_{i=0}^{\alpha-1} B(x_i,1) }|u(x)|^2 dx=\alpha(d,r_0)\int_{B(x_0,1) }|u(x)|^2 dx.$$
Combining this with (\ref{ball2}) and (\ref{ball3}), we conclude the proof of item $i)$.
\medskip

$ii)$ Consider $r_0\ge1$. From geometric considerations,
there exists $\beta (d,r_0)>0$ such that the crown $\mathcal{C}(r_0,1)$ is contained in $\beta (d, r_0)$ copies of $B(0,1)$, with

\begin{eqnarray}\label{ball4}
\beta (d, r_0)\equiv \beta_0 (d) r_0^{d-1}\mbox{ as } r_0\rightarrow +\infty \mbox{ for some } \beta_0 (d)>0.\end{eqnarray}

Denoting by $y_i $ for $i\in \{0,...,\beta-1\}$ the centers of those balls, we have 
\begin{align*}\frac{1}{r_0^{d-1}}
\int_{r_0-1}^{r_0+1}|\tilde u(r)|^2 r^{d-1}dr&=\frac{1}{\omega_{d-1}r_0^{d-1}} \int_{\mathcal{C}(|x_0|,1)}|u(x)|^2 dx\\&\le\frac{1}{\omega_{d-1}r_0^{d-1}}      \sum_{i=0}^{\beta-1}  \int_{B(y_i,1) }|u(x)|^2 dx  \le  \frac{\beta (d,r_0) }{\omega_{d-1}r_0^{d-1}} A.
\end{align*}

Using (\ref{ball4}), we conclude the proof of item $ii)$.   

\end{proof}
\newpage


\begin{thebibliography}{10}

\bibitem{AS}
S.~Alexakis and A.~Shao.
\newblock On the profile of energy concentration at blow-up points for
  subconformal focusing nonlinear waves.
\newblock {\em Trans. Amer. Math. Soc.}, 369(8):5525--5542, 2017.

\bibitem{AM01}
C.~Antonini and F.~Merle.
\newblock Optimal bounds on positive blow-up solutions for a semilinear wave
  equation.
\newblock {\em Internat. Math. Res. Notices}, (21):1141--1167, 2001.

\bibitem{CZ12}
R.~C{\^o}te and H.~Zaag.
\newblock Construction of a multisoliton blowup solution to the semilinear wave
  equation in one space dimension.
\newblock {\em Comm. Pure Appl. Math.}, 66(10):1541--1581, 2013.

\bibitem{HZ2012}
M.~A. Hamza and H.~Zaag.
\newblock Lyapunov functional and blow-up results for a class of perturbations
  of semilinear wave equations in the critical case.
\newblock {\em J. Hyperbolic Differ. Equ.}, 9(2):195--221, 2012.

\bibitem{HZ12}
M.~A. Hamza and H.~Zaag.
\newblock A {L}yapunov functional and blow-up results for a class of perturbed
  semilinear wave equations.
\newblock {\em Nonlinearity}, 25(9):2759--2773, 2012.

\bibitem{HZ}
M.~A. Hamza and H.~Zaag.
\newblock Blow-up behavior for the {K}lein-{G}ordon and other perturbed
  semilinear wave equations.
\newblock {\em Bull. Sci. Math.}, 137(8):1087--1109, 2013.

\bibitem{HZ2019}
M.~A. Hamza and H.~Zaag.
 \newblock Prescribing the center of mass of a multi-soliton solution
for a perturbed semilinear wave equation. 
 \newblock {\em J. Differential Equations}, 267(6):3524--3560,
2019.

\bibitem{MZ05}
F.~Merle and H.~Zaag.
\newblock Determination of the blow-up rate for the semilinear wave equation.
\newblock {\em Amer. J. Math.}, 125(5):1147--1164, 2003.

\bibitem{MR2147056}
F.~Merle and H.~Zaag.
\newblock On growth rate near the blowup surface for semilinear wave equations.
\newblock {\em Int. Math. Res. Not.}, (19):1127--1155, 2005.

\bibitem{MR2362418}
F.~Merle and H.~Zaag.
\newblock Existence and universality of the blow-up profile for the semilinear
  wave equation in one space dimension.
\newblock {\em J. Funct. Anal.}, 253(1):43--121, 2007.

\bibitem{MR2415473}
F.~Merle and H.~Zaag.
\newblock Openness of the set of non-characteristic points and regularity of
  the blow-up curve for the 1 {D} semilinear wave equation.
\newblock {\em Comm. Math. Phys.}, 282(1):55--86, 2008.

\bibitem{MR2799813}
F.~Merle and H.~Zaag.
\newblock Blow-up behavior outside the origin for a semilinear wave equation in
  the radial case.
\newblock {\em Bull. Sci. Math.}, 135(4):353--373, 2011.

\bibitem{MR2931219}
F.~Merle and H.~Zaag.
\newblock Existence and classification of characteristic points at blow-up for
  a semilinear wave equation in one space dimension.
\newblock {\em Amer. J. Math.}, 134(3):581--648, 2012.

\bibitem{Mz12}
F.~Merle and H.~Zaag.
\newblock Isolatedness of characteristic points at blowup for a 1-dimensional
  semilinear wave equation.
\newblock {\em Duke Math. J}, 161(15):2837--2908, 2012.

\bibitem{rty}
G.~Todorova P.~Radu and B.~Yordanov.
\newblock Decay estimates for wave equations with variable coefficients.
\newblock {\em Trans. Amer. Math. Soc.}, 362(5):2279--2299, 2010.

\bibitem{GT}
V.~Georgiev~G. Todorova.
\newblock Existence of a solution of the wave equation with nonlinear damping
  and source terms.
\newblock {\em Journal of Differential Equations}, 109(2):295--308, 1994.

\end{thebibliography}

\noindent{\bf Address:}\\
\small Higher Institute for Preparatory Studies in Biology-Geology (ISEP-BG),\\
 University of Carthage, 6 Avenue 13 ao\^{u}t, 2036. La Soukra, Tunis, Tunisia
\\ \texttt{e-mail: asma.azaiez@yahoo.fr}\\

\noindent Universit\'e Sorbonne Paris Nord, Institut Galil\'ee, Laboratoire Analyse, G\'eom\'etrie et Applications,\\
CNRS UMR 7539, 99 avenue J.B. Cl\'ement, 93430 Villetaneuse, France.
\\ \texttt{e-mail: Hatem.Zaag@univ-paris13.fr}\\
\end{document}